\tikzset{
  dim above/.style={to path={\pgfextra{
        \pgfinterruptpath
        \draw[>=latex,|<->|] let
        \p1=($(\tikztostart)!2mm!90:(\tikztotarget)$),
        \p2=($(\tikztotarget)!2mm!-90:(\tikztostart)$)
        in(\p1) -- (\p2) node[pos=.5,sloped,above]{#1};
        \endpgfinterruptpath
      }(\tikztostart) -- (\tikztotarget) \tikztonodes
    }}, 
    dim below/.style={to path={\pgfextra{
        \pgfinterruptpath
        \draw[>=latex,|<->|] let
        \p1=($(\tikztostart)!2mm!-90:(\tikztotarget)$),
        \p2=($(\tikztotarget)!2mm!90:(\tikztostart)$)
        in(\p1) -- (\p2) node[pos=.5,sloped,above]{#1};
        \endpgfinterruptpath
      }(\tikztostart) -- (\tikztotarget) \tikztonodes
    }},}
\newtheorem{theorem}{Theorem}
\newtheorem{corollary}{Corollary}
\newtheorem{lemma}{Lemma}
\newtheorem{definition}{Definition}
\newtheorem{proposition}{Proposition}
\theoremstyle{remark}
\newtheorem*{remark}{Remark}
\begin{document}
\title[A shape optimization problem for the first eigenvalue]{A shape optimization problem for the first mixed Steklov-Dirichlet eigenvalue}

\author{Dong-Hwi Seo}
\address{Department of Mathematical Sciences, KAIST, 291 Daehak-ro Yuseong-gu, Daejeon
34141, Korea}
\email{donghwi.seo@kaist.ac.kr}

\keywords{Spectral geometry \and Steklov spectrum \and homogeneous space \and comparison geometry \and trigonometry}
\subjclass[2010]{Primary 58J50, 35P15; Secondary 43A85}

\maketitle

\begin{abstract}
We consider a shape optimization problem for the first mixed Steklov-Dirichlet eigenvalues of domains bounded by two balls in two-point homogeneous space. We give a geometric proof which is motivated by Newton's shell theorem..

\end{abstract}

\section{Introduction}
\label{intro}
Let $M^m$ be a Riemannian manifold of dimension $m\ge 2$ and $\Omega\subset M$ a bounded smooth domain with the boundary $\partial \Omega$. Let $\partial \Omega = C_1 \cup C_2$ where $C_1$ and $C_2$ are disjoint components. A mixed Steklov-Dirichlet eigenvalue problem is to find $\sigma \in \mathbb{R}$ for which there exists $u\in C^{\infty}(\Omega)$ satisfying
\begin{align} \label{problem}
\left\{
\begin{array}{rcll}
     \Delta u &=& 0&   \text{in   } \Omega  \\
     u &=& 0& \text{on   } C_1\\
     \frac{\partial u}{\partial \eta} &=& \sigma u &\text{on   } C_2
\end{array} \right.
,
\end{align}
where $\eta$ is the outward unit normal vector along $C_2$. When $C_1 = \phi$, the problem becomes the Steklov eigenvalue problem introduced by Steklov in 1902 \cite{stekloff1902problemes}. We will find a domain maximizing the lowest $\sigma$ in a class of subsets in $M$. We call this problem by a shape optimization problem of the first eigenvalue.

The shape optimization problem of the first nonzero Steklov eigenvalue in Euclidean space has been studied since the 1950s. In 1954, Weinstock considered the case when $M=\mathbb{R}^2$ \cite{weinstock1954inequalities}. He showed that the disk is the maximizer among all the simply connected domains with the same boundary lengths. Recently, Bucur, Ferone, Nitsch, and Trombetti studied this perimeter constraint shape optimization problem in any dimension among all the convex sets, and showed that the ball is the maximizer \cite{bucur2017weinstock}. Without the convexity condition, Fraser and Schoen proved the ball cannot be a maximizer even among all the smooth contractible domains of fixed boundary volume in $\mathbb{R}^m$, $m\ge 3$ \cite{fraser2019shape}. On the other hand, Brock \cite{brock2001isoperimetric} proved in 2001 that the ball is the maximizer among all the smooth domains with fixed domain volume in $\mathbb{R}^m$, $m \ge 2$. Note that he does not need any topological restriction.

These shape optimization problems have been extended to non-Euclidean spaces as well. The first result in this direction was given by Escobar \cite{escobar1999isoperimetric} who showed that the first nonzero eigenvalue is maximal for the geodesic disk among all the simply connected domains with fixed domain area in simply connected complete surface $M^2$ with constant Gaussian curvature. In 2014, Binoy and Santhanam extended this result to noncompact rank one symmetric spaces of any dimension \cite{binoy2014sharp}.

Regarding mixed Steklov-Dirichlet eigenvalue problems, it was considered by Hersch and Payne in 1968 \cite{hersch1968extremal}. They considered the problem (\ref{problem}) when $\Omega\subset \mathbb{R}^2$ is a doubly connected region bounded by the inner and the outer boundaries, $C_1$ and $C_2$, respectively. Then among all the conformally equivalent domains with fixed perimeter of $C_2$, the annulus bounded by two concentric circles is the maximizer. Recently, Santhanam and Verma considered connected regions in $\mathbb{R}^m$ with $m \ge 3$ that are bounded by two spheres of given radii and gave the Dirichlet condition only on the inner sphere. Then the maximizer is obtained by the domain bounded by two concentric spheres \cite{verma2018eigenvalue} (See also Ftouhi \cite{ftouhi2019where}).

The aim of this paper is to extend Santhanam and Verma's result \cite{verma2018eigenvalue} from Euclidean spaces to two-point homogeneous spaces including $\mathbb{R}^2$.  The main theorem is as follows. We denote the injectivity radius of $M$ and the closure of a set $A\subset M$ by $inj(M)$ and $\text{cl}(A)$, respectively.

\begin{theorem} \label{main} Let $M$ be a two-point homogeneous space. Let $\mathbf{B}_1$ and $\mathbf{B}_2^\prime$ be geodesic balls of radii $R_1, R_2>0$, respectively, such that $\textnormal{cl}(\mathbf{B}_1)\subset \mathbf{B}_2^\prime$ and $R_2 < inj(M)/2$. Then the first mixed Steklov-Dirichlet eigenvalue of the problem  
\begin{align} \label{vermaproblem}
    \left\{
    \begin{array}{rcll}
    \Delta u &=& 0 &\textnormal{in   } \mathbf{B}_2^\prime \symbol{92} \textnormal{cl}(\mathbf{B}_1)\\
    u &=& 0 &\textnormal{on   } \partial \mathbf{B}_1\\
    \frac{\partial u}{\partial \eta} &=& \sigma u &\textnormal{on   } \partial \mathbf{B}_2^\prime
    \end{array}
    \right.
\end{align}
($\eta$ : the outward unit normal vector along $\partial \mathbf{B}_2^\prime$) attains maximum if and only if $\mathbf{B}_1$ and $\mathbf{B}_2^\prime$ are concentric.
\end{theorem}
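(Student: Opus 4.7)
The plan, motivated by Newton's shell theorem, is to use the radial first eigenfunction of the concentric problem as a test function in the variational characterization of $\sigma_1$ and to bound the numerator and denominator of the Rayleigh quotient separately against their concentric analogues. Let $p_1, p_2$ denote the centers of $\mathbf{B}_1$ and $\mathbf{B}_2^\prime$, and let $\sigma^{\ast}$ and $v(r)$ be the first eigenvalue and its positive, strictly monotone radial eigenfunction for the analogous concentric problem in which $\mathbf{B}_2^\prime$ is replaced by the ball of radius $R_2$ centered at $p_1$, normalized so that $v(R_1)=0$. Since $v$ solves the radial harmonic ODE $(s(r)^{m-1}v')'=0$ — where $s(r)$ is the metric density of $M$ in geodesic polar coordinates — it extends to a function on $(0, inj(M))$ for which $\tilde v(x):=v(d(x,p_1))$ is harmonic on $M\setminus\{p_1\}$. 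The hypotheses $\textnormal{cl}(\mathbf{B}_1)\subset \mathbf{B}_2^\prime$ and $R_2<inj(M)/2$ yield $d(p_1,p_2)<R_2-R_1$ and hence $d(\cdot,p_1)<2R_2<inj(M)$ on $\textnormal{cl}(\mathbf{B}_2^\prime)$, so $\tilde v$ is a smooth test function on $\mathbf{B}_2^\prime\setminus\textnormal{cl}(\mathbf{B}_1)$ that vanishes on $\partial\mathbf{B}_1$.

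For the denominator I would invoke a Newton shell theorem for two-point homogeneous spaces: since $p_1\in\mathbf{B}_2^\prime$ and $\tilde v$ is a radial harmonic function with pole at $p_1$, its spherical mean over $\partial\mathbf{B}_2^\prime$ equals $v(R_2)$. This is classical in the constant-curvature cases, and in the remaining rank-one symmetric spaces it follows from the explicit form of the radial Green's function together with the fact that those spaces are harmonic manifolds. Jensen's inequality applied to $t\mapsto t^2$ then yields
\[
\int_{\partial\mathbf{B}_2^\prime} \tilde v^2 \, dA \;\ge\; v(R_2)^2 \, |\partial\mathbf{B}_2^\prime|,
\]
with equality iff $\tilde v$ is constant on $\partial\mathbf{B}_2^\prime$; since $v$ is strictly monotone this forces $d(\cdot,p_1)\equiv R_2$ on $\partial\mathbf{B}_2^\prime$, that is, $p_1=p_2$.

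For the numerator, Green's identity combined with $\Delta\tilde v=0$ and the Dirichlet condition $\tilde v=0$ on $\partial\mathbf{B}_1$ gives
\[
\int_{\mathbf{B}_2^\prime\setminus\textnormal{cl}(\mathbf{B}_1)} |\nabla\tilde v|^2 \, dV \;=\; \int_{\partial\mathbf{B}_2^\prime} v(r)\,v'(r)\cos\theta(x)\,dA,
\]
where $r=d(x,p_1)$ and $\theta(x)$ is the angle at $x$ between the geodesic to $p_1$ and the outward unit normal to $\partial\mathbf{B}_2^\prime$. The two-point homogeneity of $M$ implies $|\partial\mathbf{B}_2^\prime|$ equals the area of the concentric sphere of radius $R_2$, and the goal is to show the right-hand side does not exceed the concentric numerator value $v(R_2)v'(R_2)|\partial\mathbf{B}_2^\prime|$. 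Exploiting the axial symmetry about the geodesic arc $\overline{p_1p_2}$, one may parametrize $\partial\mathbf{B}_2^\prime$ by the latitude angle $\psi$ at $p_2$, use the law of cosines of $M$ to express $r$ and $\cos\theta$ in terms of $\psi$, $R_2$, and $d(p_1,p_2)$, and combine with the identity $v'(r)\,s(r)^{m-1}\equiv C$ to reduce to a single-variable integral inequality; a pairing of the "near" angles (where $r<R_2$) with the "far" angles (where $r>R_2$) then closes the estimate. Combining the two bounds gives $\sigma_1(\mathbf{B}_2^\prime\setminus\textnormal{cl}(\mathbf{B}_1))\le\sigma^{\ast}$ with equality iff $p_1=p_2$.

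The main obstacle is precisely this pairing estimate in the numerator step. Pointwise the integrand $v(r)v'(r)\cos\theta$ is not everywhere at most its concentric value $v(R_2)v'(R_2)$ — it exceeds this value on the hemisphere of $\partial\mathbf{B}_2^\prime$ facing $p_1$ and falls short on the opposite hemisphere — so the bound is genuinely an integral one, and matching the excess against the deficit requires a careful use of the two-point homogeneous trigonometry, in particular the law of cosines and the monotonicity properties of $s(r)$ on $[0,inj(M))$. The hypothesis $R_2<inj(M)/2$ is exactly what confines every relevant distance to the regime where these identities simplify and where the comparison functions retain the monotonicity needed for the pairing to close; in Euclidean space this reduces to the elementary decreasing-integrand argument of Santhanam–Verma.
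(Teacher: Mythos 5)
Your overall architecture --- insert the first eigenfunction of the concentric annulus as a test function and bound the numerator and denominator of the Rayleigh quotient separately against their concentric values --- is exactly the paper's. Your denominator step takes a genuinely different route and is sound in outline: the paper shows that $X\mapsto\int_{\partial\mathbf{B}_2'}(v\circ r_X)^2\,dS$ has gradient pointing away from $C'$ (Proposition \ref{gradientofL2sum}, via a change of variables to the sphere of directions at $X$ and a symmetry lemma pairing $P$ with $-P$), whereas you use the potential form of the shell theorem plus Jensen. The mean-value identity you invoke is true, but deserves a proof rather than a gesture at Green's functions: the function $y\mapsto\frac{1}{|\partial\mathbf{B}_2'|}\int_{\partial\mathbf{B}_2'}v(d(x,y))\,dA(x)$ is harmonic in $y$ on $\mathbf{B}_2'$, invariant under the isotropy group at $p_2$, and regular at $p_2$, hence constant equal to $v(R_2)$. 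Either way one lands on Corollary \ref{L2sum}, with the same equality case.

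The genuine gap is the numerator. You correctly reduce it to $\int_{\partial\mathbf{B}_2'}v(r)v'(r)\cos\theta\,dA\le v(R_2)v'(R_2)\,|\partial\mathbf{B}_2'|$, note that the pointwise comparison fails, and then describe --- but do not carry out --- a ``pairing of near and far angles.'' This is the hard step, and the proposed pairing does not obviously close. Since $v'(r)\,\omega(r)$ is constant, the change of variables of Lemma \ref{radon} applied at $X=p_1$ turns your inequality into $\frac{1}{|\mathbb{S}^{m-1}|}\int_{\mathbb{S}^{m-1}}v(\rho(p))\,d\mu(p)\le v(R_2)$, where $\rho(p)$ is the exit distance from $p_1$ to $\partial\mathbf{B}_2'$ in the direction $p$. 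For nCROSS this can be closed by concavity of $v$ (i.e.\ $1/\omega$ decreasing) together with an antipodal estimate on $\rho(p)+\rho(-p)$; but for CROSS with $inj(M)/4\le R_2<inj(M)/2$ the density $\omega$ changes monotonicity on $(0,2R_2)$, $v$ is no longer concave there, and the linearization-plus-pairing argument breaks down. This is precisely the regime the paper must treat separately: it avoids the boundary integral entirely, rewrites the numerator difference as a volume comparison of $\int\omega(r_{p_1})^{-2}$ over $\mathbf{B}_2\setminus\mathbf{B}_2'$ versus $\mathbf{B}_2'\setminus\mathbf{B}_2$, and closes it by slicing into the spheres $\partial B_{R_2\pm s}(p_1)$, comparing angular measures via the antipodal map in a model sphere (Lemma \ref{measure comparison}) and proving $\omega(R_2-s)<\omega(R_2+s)$. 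Without an argument of this kind your numerator bound, and hence the theorem and its equality case, remain unproved.
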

Two-point homogeneous space has similar geometric properties with Euclidean space. For example, for two geodesic balls $\mathbf{B}_3$ and $\mathbf{B}_4'$ of radii $R_1$ and $R_2$, respectively, satisfying $\text{cl}(\mathbf{B}_3) \subset \mathbf{B}_4'$, $\mathbf{B}_4' \setminus \text{cl}(\mathbf{B}_3)$ is isometric to $\mathbf{B}_2' \setminus \text{cl}(\mathbf{B}_1)$ if and only if the distance of the centers of $\mathbf{B}_3$ and $\mathbf{B}_4'$ is equal to that of $\mathbf{B}_1$ and $\mathbf{B}_2'$. Furthermore, using additional angles, which are not usual Riemannian angles, there are laws of trigonometries and conditions for triangle conditions (for example, see Proposition \ref{congruence}) in two-point homogeneous space.

In section \ref{section2}, we will briefly review the variational characterization of the mixed Steklov-Dirichlet eigenvalue problem (\ref{vermaproblem}) (section \ref{section2.1})  as well as two-point homogeneous spaces and its trigonometries (section \ref{section2.2}).

Section \ref{section3}  is devoted to the proof of the main theorem. We estimate the first eigenvalue by substituting an appropriate test function in the Rayleigh quotient for the variational characterization of the first eigenvalue for $\mathbf{B}_2'\setminus \textnormal{cl}(\mathbf{B}_1)$  (see (\ref{variational characterization})). Our test function is the first mixed Steklov-Dirichlet eigenfunction on the domain bounded by concentric balls (Proposition \ref{firsteigenfunction}).

Before proving main theorem, we obtain some crucial lemmas in section \ref{section3.2.1}. As a corollary, we give a proof of Newton's shell theorem for a ball whose radius less than $inj(M)/2$ in a two-point homogeneous space (see Corollary \ref{newton} and the following Remark). This theorem was first proved by Newton \cite{Newton1972principia} for $M=\mathbb{R}^3$ and it was extended to constant curvature spaces by Kozlov \cite{Kozlov2000newton} and Izmestiev and Tabachnikov \cite{Izmestiev2017ivory}.

In section \ref{section3.2.2}, we prove the main theorem for noncompact rank one symmetric spaces (noted nCROSSs). We estimate the denominator of the Rayleigh quotient by using a geometric proof motivated by the proof of Newton's shell theorem (see Corollary \ref{L2sum}).

However, the argument of estimation of the numerator of the Rayleigh quotient in section \ref{section3.2.2} does not work for compact rank one symmetric spaces (noted CROSSs) when $inj(M)/4\le R_2< inj(M)/2$. We overcome this problem by partitioning domains and observing symmetry of a sphere (see section \ref{section3.2.3}). It is reminiscent of Ashbaugh and Benguria's work \cite{Ashbaugh:1995:SUB} on a shape optimization problem of the first nonzero Neumann eigenvalue for bounded domains with fixed domain volume in $M=\mathbb{S}^m$. They showed that the geodesic ball is the maximizer if we restricted $\Omega$ to be contained in a geodesic ball of radius $inj(M)/2$. This domain restriction is a refinement of Chavel's work \cite{Chavel:1980:LEI}. But the analogous result is not known if $M$ is CROSS (see Aithal and Santhanam \cite{Aithal:1996:SUB}).

\section{Background} \label{section2}
\subsection{The eigenvalue problem}\label{section2.1}
A mixed Steklov-Dirichlet eigenvalue problem (\ref{problem}) is equivalent to the eigenvalue problem of the Dirichlet-to-Neumann operator :
\begin{align*}
    L : C^{\infty}(C_2) &\longrightarrow C^{\infty}(C_2)\\
    u &\longmapsto \frac{\partial \hat{u}}{\partial \eta},
\end{align*}
where $\hat{u}$ is the harmonic extension of $u$ satisfying the following
\begin{align*}
    \left\{
\begin{array}{rcll}
     \Delta \hat{u} &=& 0&   \text{in   } \Omega  \\
     \hat{u} &=& 0& \text{on   } C_1\\
     \hat{u} &=& u &\text{on   } C_2
\end{array} \right. .
\end{align*}
Then $L$ is a positive-definite, self-adjoint operator with discrete spectrum (see for instance \cite{agranovich2006mixed}),
\begin{align*}
    0<\sigma_1(\Omega)\le \sigma_2(\Omega) \le \cdots \rightarrow \infty,
\end{align*}
provided that $C_1\neq \phi$.
We call $\sigma_k(\Omega)$ by the $k$th mixed Steklov-Dirichlet eigenvalue, or simply the $k$th eigenvalue. An eigenfunction of $L$ corresponding to $\sigma_k(\Omega)$ is called the $k$th mixed Steklov-Dirichlet eigenfunction, or the $k$th eigenfunction. Then the first eigenvalue $\sigma_1(\Omega)$ is characterized variationally as follows
\begin{align} \label{variational characterization}
    \sigma_1(\Omega) = \inf \left\{ \left.\frac{\displaystyle \int_{\Omega}|\nabla v|^2 dV}{\displaystyle\int_{C_2}v^2 ds} \right| v \in H^1(\Omega)\setminus\{0\} \textnormal{ and } v=0 \textnormal{ on } C_1   \right\}.
\end{align}
For convenience we shall call the harmonic extension of the $k$th eigenfunction by the $k$th mixed Steklov-Dirichlet eigenfunction or the $k$th eigenfunction.

\subsection{Two-point homogeneous spaces and triangle congruence conditions}\label{section2.2}
Three points in a Euclidean space determine a triangle when three points are not lie on a single line. In classical geometry, there are several congruence conditions on triangles and it is determined by lengths of sides and angles. For example, side-angle-side (SAS) congruence is given by two side lengths and the included angle. In two-point homogeneous spaces, analogous properties also hold with additional angles. These facts are obtained by the laws of trigonometries. In this section, we give some information about two-point homogeneous spaces and its congruence conditions of triangles which will be used later. See \cite{wolf1972spaces},\cite{hsiang1989laws},\cite{brehm1990shape} for more details.

\begin{definition}
A connected Riemannian manifold $M$ is called two-point homogeneous space if $x_i,y_i \in M, i=1,2$ with $dist(x_1, y_1)=dist(x_2,y_2)$, there is an isometry $g$ of $M$ such that $g(x_1)=x_2$ and $g(y_1)=y_2$.
\end{definition}
In fact, two-point homogeneous spaces are Euclidean spaces or rank one symmetric spaces. We will call the latter spaces  by ROSSs. Furthermore, compact ROSS and noncompact ROSS are denoted by CROSS and nCROSS, respectively. Then two-point homogeneous spaces with their isotropy representations are classified as in the Table \ref{twopointhomogeneous space} (see \cite{wolf1972spaces},\cite{hsiang1989laws}). Here $m\ge 1, n\ge 2$ and $m=\textnormal{dim}_{\mathbb{R}}M = n \cdot \textnormal{dim}_{\mathbb{R}}\mathbb{K}$.

\begin{table}[]
\caption{Two-point homogeneous spaces, $m\ge 1, n\ge 2$.}\label{twopointhomogeneous space}
\begin{center}
\begin{tabular}{|c|c|c|c|c|}
\hline
 &   & CROSS & nCROSS & Isotropy representation \\
 \hline
$\mathbb{K}=\mathbb{R}$ & $\mathbb{R}^m$ & $\mathbb{S}^m, \mathbb{R}P^n$  & $\mathbb{R}H^n$ & $(O(m),\mathbb{R}^m)$ \\
\hline
$\mathbb{K}=\mathbb{C}$ & $\cdot$ & $\mathbb{C}P^n$ & $\mathbb{C}H^n$ &  $(U(n),\mathbb{R}^{2n})$ \\
\hline
$\mathbb{K}=\mathbb{H}$ & $\cdot$ & $\mathbb{H}P^n$ & $\mathbb{H}H^n$ & $(Sp(1)\times Sp(n),\mathbb{R}^{4n})$ \\
\hline
$\mathbb{K}=\mathbb{O}$ & $\cdot$ & $\mathbb{O}P^2$ & $\mathbb{O}H^2$ & $(Spin(9),\mathbb{R}^{16})$\\
\hline
\end{tabular}
\end{center}
\end{table}

 An angle is given by two directions at a point $P$. It is classified by its congruence classes which are given by the orbit space of $U_{P}M \times U_{P}M/K$, where $U_PM$ is the unit sphere in the tangent space of $M$ at $P$, and $K$ is the isotropy subgroup of the isometry group $M$ at $P$. The orbit space can be seen by fixing the first component by the action of $K$. More precisely, it is equivalent to an orbit space $U_PM/H$ of an isotropy group $H\subset K$ with respect to a point in $U_PM$. Then it can be checked that for given $\vec{v}_1\in U_PM$,  $H$-invariant subspaces are $\mathbb{R}\cdot \vec{v}_1, \mathbb{K}'\cdot \vec{v}_1,$ and the subspace orthogonal to $\mathbb{K}\cdot \vec{v}_1$, where $\mathbb{K}=\mathbb{R}, \mathbb{C}, \mathbb{H}$, and $\mathbb{O}$ and $\mathbb{K}'$ is the set of pure imaginary numbers in $\mathbb{K}$. Then a direction $\vec{v}_2$ is determined up to $H$-action by the following angular invariants (for more details, see \cite{hsiang1989laws},\cite{brehm1990shape}): 
 \begin{itemize}
     \item $\lambda(\vec{v}_1,\vec{v}_2) = \angle(\vec{v}_1,\vec{v}_2)$ ; $0\le\lambda\le\pi$,
     \item $\varphi(\vec{v}_1,\vec{v}_2)=\angle(\vec{v}_1, \mathbb{K}\cdot \vec{v}_2)$ ; $0\le\varphi\le\frac{\pi}{2}$,
 \end{itemize}
where $\angle(\vec{v}_1, \vec{v}_2)$ is the usual (Riemannian) angle and $\angle(\vec{v}_1,\mathbb{K}\cdot \vec{v}_2)$ is the angle between $\vec{v}_1$ and the subspace $\mathbb{K}\cdot \vec{v}_2$. Note that when $\mathbb{K}=\mathbb{R}$, $\lambda=\varphi$ or $\lambda=\pi-\varphi$. Then angular invariants satisfy following relations : 
\begin{align} \label{180}
    \lambda(\vec{v}_1,-\vec{v}_2) &= \pi - \lambda(\vec{v}_1,\vec{v}_2), \\ \label{180_2}
    \varphi(\vec{v}_1,-\vec{v}_2) &= \varphi(\vec{v}_1,\vec{v}_2).
\end{align}

Using the previous $H$-invariant decomposition, we can write the metric of ROSS $M$ explicitly.  Let $s(r)$ and $c(r)$ be functions defined as follows :
\begin{align*}
    s(r)=
    \begin{cases}
    \sin{r} \text{ with } 0\le r<\pi & \text{if } M=\mathbb{S}^m\\
    \sin{r} \text{ with } 0\le r<\frac{\pi}{2} & \text{if } M=\mathbb{R}P^n, \mathbb{C}P^n, \mathbb{Q}P^n, \mathbb{O}P^2\\
    \sinh{r} & \text{if } M \text{ is nCROSS}
    \end{cases}\\
\end{align*}
and
\begin{align*}
    c(r)=
    \begin{cases}
    \cos{r} \text{ with } 0\le r<\frac{\pi}{2} & \text{if } M=\mathbb{C}P^n, \mathbb{Q}P^n, \mathbb{O}P^2\\
    \cosh{r} & \text{if } M \text{ is nCROSS}.
    \end{cases}
    \end{align*}
Then the metric $(ds)^2$ is given by
\begin{align}\label{metric}
  (ds)^2 = (dr)^2+(s(r))^2(c(r))^2g+(s(r))^2h,  
\end{align}
where $(dr)^2,g,$ and $h$ are written by $\sigma_1^2$ with the coframe $\sigma_1$ dual to $\vec{v}_1$;  $\sigma_2^2+\cdots +\sigma_k^2$ with coframes $\sigma_2,\dots, \sigma_k$ dual to orthonormal basis of $\mathbb{K}'\cdot \vec{v}_1$; $\sigma_{k+1}^2+\cdots +\sigma_m^2$ with coframes $\sigma_{k+1},\dots, \sigma_m$ dual to the complement orthonormal basis of $\mathbb{R}^m$. Since the density function $\omega$ only depends on distance, we may define $\omega$ as a one-variable function
\[ \omega(r) = (s(r))^{m-1}(c(r))^{k-1}.
\]
Then the sectional curvature $K_M$ of $M$: 
\begin{align} \label{curvature}
    \begin{cases}
        1\le K_M \le 4 & \text{if } M \text{is CROSS}\\
        -4\le K_M \le -1 & \text{if }M \text{is nCROSS}.
    \end{cases}
\end{align}
In particular, $\mathbb{S}^m$ and $\mathbb{R}P^n$ has sectional curvature 1. Then  the condition $0<R_2 < \frac{inj(M)}{2}$ in Theorem 1 implies:
\begin{align}
\left\{
    \begin{array}{ll}
     0<R_2<\frac{\pi}{2} & \text{if } M=\mathbb{S}^m  \\
     0<R_2<\frac{\pi}{4} & \text{if } M=\mathbb{R}P^n, \mathbb{C}P^n, \mathbb{H}P^n, \mathbb{O}P^2\\
     0<R_2 & \text{otherwise}.
    \end{array}
\right.
\end{align}

Now consider a triangle $(PQR)$ in $M$ with the metric (\ref{metric}), which consists of three distinct points $P,Q,R\in M$ and three connecting geodesics $QR, RP, PQ$. The side lengths will be denoted by $p, q$, and $r$, respectively and the two angular invariants $\lambda, \varphi$ determined by the two tangent vectors of geodesic rays $\vv{PQ}$ and $\vv{PR}$ at $P$ will be denoted by $\lambda(P)$ and $\varphi(P)$, respectively. Furthermore we can denote $\lambda(Q)$, $\varphi(Q)$, $\lambda(R)$, and $\varphi(R)$ in an analogous way. Then it is known that there are congruent conditions of triangles. We introduce some conditions which will be used later. For more conditions, see \cite{brehm1990shape}.
\begin{proposition}\label{congruence}
 A triangle (PQR) in ROSS with the metric $\textnormal{(\ref{metric})}$ is uniquely determined up to isometry as follows :
 \begin{enumerate}[label=(\alph*)]
    \item \label{a} $p, q,$ and $\lambda(P)$ with $0<p,q,r<\pi$ and $q<p<\frac{\pi}{2}$ if $M$ is $\mathbb{S}^m$.
    \item \label{b} $p, q,$ and $\lambda(P)$ with $0<p,q,r<\frac{\pi}{2}$ and $q<p<\frac{\pi}{4}$ if $M$ is $\mathbb{R}P^n$.
     \item \label{c} $p, q,$ $\lambda(P)$, and $\varphi(P)$ with $0<p,q,r<\frac{\pi}{2}$ and $(p-q)(\cos{p}-\sin{q}\cos{\varphi(P)})>0$ if $M$ is $\mathbb{C}P^n, \mathbb{H}P^n$ or $\mathbb{O}P^2$.  
     \item \label{d} $p, q, \lambda(P)$, and $\varphi(P)$ with $0<p,q,r$ and $q<p$ if $M$ is nCROSS.
 \end{enumerate}
\end{proposition}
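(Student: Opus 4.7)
The proof plan is to reduce triangle congruence to the orbit classification of ordered pairs of unit tangent vectors at the vertex $P$ under the isotropy group $K$ at $P$, which was established just before the statement of the proposition: an orbit in $(U_P M \times U_P M)/K$ is classified by $\lambda$ alone when $\mathbb{K}=\mathbb{R}$ (using the relation $\lambda = \varphi$ or $\lambda = \pi - \varphi$) and by the pair $(\lambda, \varphi)$ when $\mathbb{K} \in \{\mathbb{C}, \mathbb{H}, \mathbb{O}\}$.

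Suppose $(PQR)$ and $(P'Q'R')$ satisfy the hypotheses of one of (a)--(d) with the same numerical data. By two-point homogeneity, an isometry moves $P$ to $P'$, so we may assume $P = P'$. The side-length constraints ensure $p, q < \textnormal{inj}(M)$, so the minimizing geodesic segments from $P$ to $Q$ and from $P$ to $R$ are unique, and the initial unit tangent vectors $\vec{v}_1 := \vv{PQ}/p$, $\vec{v}_2 := \vv{PR}/q$ (and their primed analogues) are well-defined. By the orbit classification and the equality of angular invariants, the pairs $(\vec{v}_1, \vec{v}_2)$ and $(\vec{v}_1{}', \vec{v}_2{}')$ lie in the same $K$-orbit, so there is $k \in K$ with $k_*\vec{v}_i = \vec{v}_i{}'$ for $i = 1, 2$. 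Applying $\exp_P$ at radii $p$ and $q$ then yields $k(Q) = Q'$ and $k(R) = R'$, establishing the congruence.

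The specific numerical inequalities in each case serve to eliminate discrete ambiguities in reconstructing $(\vec{v}_1, \vec{v}_2)$ from the angular data. In (a) and (b), the normalization $q < p$ together with the upper bound $p < \textnormal{inj}(M)/2$ and $0 < r$ remove the ordering symmetry and the antipodal identifications on $\mathbb{S}^m$ and $\mathbb{R}P^n$. In (c), the sign condition $(p-q)(\cos p - \sin q \cos\varphi(P)) > 0$ plays the analogous role for $\mathbb{K}P^n$: without it, the cosine law in $\mathbb{K}P^n$ produces two admissible values of $r$ corresponding to orbits swapped by the involution $\vec{v}_2 \mapsto -\vec{v}_2$ (which preserves $\varphi$ by (\ref{180_2})), and the inequality selects the branch matching the remaining side-length data. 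Case (d) for nCROSS is similar but simpler, since $\cosh$ and $\sinh$ are monotone on the admissible range and only $q<p$ is needed to break the ordering symmetry.

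The main obstacle will be verifying that the inequality in (c) really cuts out the correct branch. I would attack this by writing out the cosine law in $\mathbb{K}P^n$ following \cite{hsiang1989laws}, solving for $\cos r$ as a function of $(p, q, \lambda, \varphi)$, and checking directly that the displayed sign condition isolates the $K$-orbit of $(\vec{v}_1, \vec{v}_2)$ consistent with the prescribed numerical data. The remaining claims then follow routinely by combining this with the orbit classification and the uniqueness of the exponential image in the prescribed radius range.
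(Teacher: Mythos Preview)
There is a genuine gap: your argument is set up for an SAS configuration, but the proposition is an SSA statement. In the paper's convention, $p=|QR|$ is the side \emph{opposite} $P$, while the two sides issuing from $P$ have lengths $q=|PR|$ and $r=|PQ|$. The data in each item are $p$, $q$, and the angular invariants at $P$; the length $r$ is \emph{not} given. So your normalization $\vec v_1=\vv{PQ}/p$ and the step ``applying $\exp_P$ at radii $p$ and $q$ yields $k(Q)=Q'$, $k(R)=R'$'' are incorrect: you only know where $R$ lands, not $Q$, because $r$ is the unknown. The orbit classification under $K$ does pin down the \emph{direction} of $\vv{PQ}$ relative to $\vv{PR}$, but to locate $Q$ you must solve the equation $|{\exp_P(r\,\vec v_1)}\,R|=p$ for $r$, and the whole point of the inequalities in (a)--(d) is to guarantee that this equation has a unique admissible root. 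Your explanation of the side conditions as removing an ``ordering symmetry'' or selecting a $K$-orbit is therefore off target; the orbit is already fixed by $(\lambda(P),\varphi(P))$, and what remains is the classical SSA ambiguity in $r$.

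For comparison, the paper does not carry out this analysis at all: its proof simply cites Todhunter's spherical trigonometry for (a), deduces (b) from (a) via the double cover, and refers to Theorems~4 and~4$'$ of Brehm \cite{brehm1990shape} for (c) and (d). Your final paragraph correctly identifies that the work for (c) lies in the cosine law for $\mathbb{K}P^n$, so the right way to repair your outline is to (i) fix the side labels, (ii) reduce, via the $K$-orbit classification, to showing that $r$ is uniquely determined by $(p,q,\lambda(P),\varphi(P))$ under the stated inequalities, and then (iii) either verify this directly from the explicit laws of cosines in \cite{hsiang1989laws,brehm1990shape} or cite Brehm as the paper does.
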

\begin{proof}
    The proof of  \ref{a} can be found in Section VI in \cite{todhunter1863spherical}. In fact, the condition $p<\frac{\pi}{2}$ can be replaced by $p+q < \pi$. The proof of \ref{b} follows from \ref{a}.  The proofs of \ref{c} and \ref{d} can be found in (ix) and (ix') of Theorem 4 and 4' in \cite{brehm1990shape}. \qed
\end{proof}

\section{Main proof} \label{section3}
Let $M$ be a ROSS with the metric (\ref{metric}). Let $C$ and $C'$ be the centers of $\mathbf{B}_1$ and $\mathbf{B}_2^\prime$, respectively. Define $\mathbf{B}_2$ to be the ball of radius $R_2$, centered at $C$. See Fig. \ref{balls}.
\begin{figure}
       \begin{center}
    \begin{tikzpicture}
\coordinate  (C') at (0,0);
\coordinate (D) at (2,0);
\coordinate (C) at (-1,0);
\fill[gray!40,even odd rule] (C) circle (0.7) (C') circle (2);

\draw[name path =circle](C') circle(2cm);
\draw (C) circle (0.7cm);
\draw (C') -- (1,1.7);
\draw (C) --(-1.7,0);
\node[left, scale=0.8] at (0.6, 1.02){$R_2$};
\node[above, scale =0.8] at (-1.3,0){$R_1$};
\node at (-1,1){$\mathbf{B}_1$};
\node at (0,2.25){$\mathbf{B}_2^\prime$};
\node[below] at (-1,0){$C$};
\node[right] at (0,0) {$C'$};
\foreach \point in {C,C'}
	\fill [black] (\point) circle (1.5pt);

	\end{tikzpicture}
\qquad	
	 \begin{tikzpicture}

\coordinate  (C) at (0,0);

\fill[gray!40,even odd rule] (C) circle (0.7) (C) circle (2);	

\draw[name path =circle](C) circle(2cm);
\draw (C) circle (0.7cm);

\node at (0,2.25){$\mathbf{B}_2$};
\node at (0,0.95){$\mathbf{B}_1$};
\node[above] at (0,0) {$C$};
\foreach \point in {C}
	\fill [black] (\point) circle (1.5pt);

	\end{tikzpicture}
	\end{center}
    \caption{Description of $\mathbf{B}_1$ and $\mathbf{B}_2^\prime$ (left), and $\mathbf{B}_1$ and $\mathbf{B}_2$ (right)}
    \label{balls}
\end{figure}
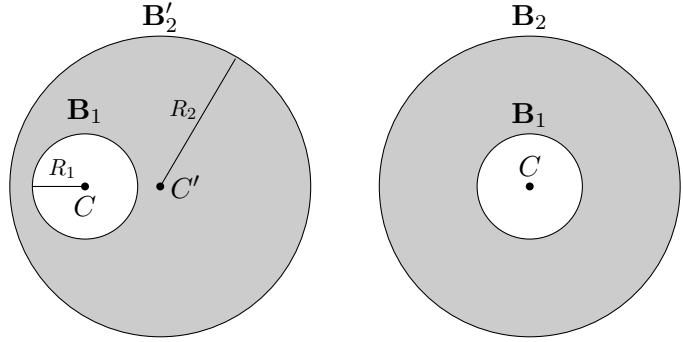
\subsection{The first eigenfunctions} \label{section3.1}
In this section, we derive an explicit formula for the first mixed Steklov-Dirichlet eigenfunctions in $\mathbf{B}_2\setminus \textnormal{cl}(\mathbf{B}_1)$. Using the following standard argument as in  \cite{castillon2019spectral} and \cite{verma2018eigenvalue}, we can show that the first eigenfunction is a function that only depends on the distance from $X$.

Using seperation of variables, a mixed Steklov-Dirichlet eigenfunction $u(r,\theta_1,\dots \theta_{m-1})$ in $\mathbf{B}_2\setminus \textnormal{cl}(\mathbf{B}_1)$ is obtained by multiplying a Laplacian eigenfunction $f(\theta_1,\dots, \theta_{m-1})$ on $\partial \mathbf{B}_2$ by an appropriate radial function $a(r)$. Here, $(r, \theta_1,\dots \theta_{m-1})$ is the polar coordinate in $T_CM$. More precisely, we have the following lemma.

\begin{lemma}\label{a(r)}
For given Laplacian eigenfunction $f : \mathbb{S}^{m-1} \rightarrow \mathbb{R}$, there exists a nonnegative function $a:[R_1, \infty)\rightarrow \mathbb{R}^+\cup\{0\}$ such that the function $u(r,\theta_1,\dots \theta_{m-1})=a(r)f(\theta_1,\dots,\theta_{m-1})$ is harmonic and $a(R_1)=0$. Specifically, $u$ is a mixed Steklov-Dirichlet eigenfunction on $\mathbf{B}_2\setminus \textnormal{cl}(\mathbf{B}_1)$.
\end{lemma}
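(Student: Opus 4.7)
The plan is to use separation of variables in geodesic polar coordinates $(r,\theta_1,\dots,\theta_{m-1})$ centered at $C$. The metric \textnormal{(\ref{metric})} together with the density $\omega(r) = (s(r))^{m-1}(c(r))^{k-1}$ give the decomposition
\begin{align*}
\Delta u = \frac{1}{\omega(r)}\,\partial_r\!\bigl(\omega(r)\,\partial_r u\bigr) + \frac{1}{s(r)^2}\,\Delta_{S_r} u,
\end{align*}
where $\Delta_{S_r}$ denotes the Laplacian on the geodesic sphere of radius $r$ in its induced metric. The crucial point is that in a ROSS $\Delta_{S_r}$ is invariant under the isotropy group $K$ at $C$, and the resulting decomposition of $L^2(\mathbb{S}^{m-1})$ into joint eigenspaces of the $g$- and $h$-pieces of $\Delta_{S_r}$ is independent of $r$. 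Choosing $f$ from such a joint eigenspace, one has $s(r)^2\,\Delta_{S_r} f = \lambda(r)\, f$ with $\lambda(r)\ge 0$ smooth in $r$; for the constant function $f\equiv 1$ relevant to the first eigenfunction, this is automatic with $\lambda\equiv 0$.

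Substituting $u = a(r)f(\theta)$ into $\Delta u = 0$ and cancelling $f$ reduces the problem to the Sturm--Liouville equation
\begin{align*}
(\omega(r)\,a'(r))' = \omega(r)\,\frac{\lambda(r)}{s(r)^2}\,a(r)
\end{align*}
on $[R_1,\infty)$, well-posed since $s(r),\omega(r)>0$ there (using $R_2 < inj(M)/2$). I would impose the initial data $a(R_1) = 0$ and $a'(R_1) = 1$; existence, uniqueness, and smoothness of the solution are standard linear ODE theory. Nonnegativity of $a$ on $[R_1,\infty)$ follows from a standard Sturm-type argument: since $\omega\lambda/s^2\ge 0$, the quantity $\omega(r)\,a'(r)$ is nondecreasing whenever $a\ge 0$, so $\omega(r)\,a'(r) \ge \omega(R_1) > 0$ and hence $a(r) > 0$ for all $r > R_1$.

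To finish, $u = a(r) f(\theta)$ is harmonic and vanishes on $\partial \mathbf{B}_1 = \{r=R_1\}$ by construction. On $\partial \mathbf{B}_2 = \{r=R_2\}$ the outward unit normal is $\partial_r$, hence
\begin{align*}
\frac{\partial u}{\partial \eta}\bigg|_{\partial \mathbf{B}_2} = a'(R_2)\,f(\theta) = \frac{a'(R_2)}{a(R_2)}\,u,
\end{align*}
exhibiting $u$ as a mixed Steklov--Dirichlet eigenfunction with eigenvalue $\sigma = a'(R_2)/a(R_2)$. The only subtle step I anticipate is the claim that a standard spherical harmonic $f$ is a genuine eigenfunction of $\Delta_{S_r}$ for every $r$: when $\mathbb{K}\neq\mathbb{R}$ the geodesic spheres are Berger-type rather than round, so one must restrict $f$ via the isotropy-invariant decomposition mentioned above. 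This complication disappears in the application to the first eigenfunction, where $f\equiv 1$.
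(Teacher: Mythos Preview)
Your proof is correct and follows the same separation-of-variables strategy as the paper, but with more elementary tools at two points. For existence of $a$, the paper appeals to Frobenius theory (observing that $r=0$ is a regular singular point of the radial ODE and citing \cite{hartman2002ordinary}) to obtain two independent solutions and then pick one vanishing at $R_1$; you instead solve the initial-value problem directly at $r=R_1$ with $a(R_1)=0,\ a'(R_1)=1$. Your route is more economical, since on $[R_1,\infty)$ the coefficients are smooth and no singular-point analysis is needed. For nonnegativity, the paper invokes the maximum principle to conclude that $a$ does not change sign, while you give an explicit monotonicity argument for $\omega a'$ using $(\omega a')'=q\,a$ with $q\ge 0$; both are valid, and yours is self-contained. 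You are also right to flag the Berger-sphere issue when $\mathbb{K}\neq\mathbb{R}$: the paper's formula already uses $\lambda(S(r))$, tacitly assuming $f$ is an eigenfunction on every geodesic sphere, and this is justified immediately after the lemma by citing \cite{castillon2019spectral,bergery1982laplacians}. One cosmetic point: in your displayed Laplacian the factor $1/s(r)^2$ in front of $\Delta_{S_r}$ is only literally correct in the $\mathbb{K}=\mathbb{R}$ cases (for $\mathbb{K}\neq\mathbb{R}$ the angular piece is $\Delta_{S_r}$ for the induced Berger metric, with no extra prefactor), but since you then set $\Delta_{S_r}f=\bigl(\lambda(r)/s(r)^2\bigr)f$ this does not affect the resulting ODE or the argument.
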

\begin{proof}
    Let $a: [R_1, \infty)\rightarrow \mathbb{R}$ be a smooth function. Then we have  
    \begin{align}
         \label{laplacian}
    \Delta(a(r)f(\theta_1,\dots \theta_{m-1})) = (a''(r)+(m-1)h(r)a'(r)-\lambda(S(r))a(r))f, 
    \end{align}
    where $S(r)$ is the geodesic sphere of radius $r$, centered at $C$, $\lambda(S(r))$ is the eigenvalue of $f$ on $S(r)$, and $h(r)$ is the mean curvature of the geodesic sphere $S(r)$ that is obtained by trace of the shape operator of $S(r)$ with respect to the inner normal vector times $\frac{1}{m-1}$. Since $(m-1)rh$ and $r^2\lambda(S(r))$ is analytic (see \cite[Proposition 5.3]{bergery1982laplacians}), 0 is a regular singular point and we can find two linearly independent solutions of the equation (see \cite[Theorem 12.1 on p.85]{hartman2002ordinary}). Thus there exists $a(r)$ such that $a(R_1)=0$ and $a\cdot f$ is harmonic. Note that $\lambda(S(r))$ is nonnegative. Then from (\ref{laplacian}) with maximum principle, $a(r)$ is a not sign-changing function. Thus we may assume $a(r)$ is nonnegative in $[R_1,\infty)$.  \qed
\end{proof}

Since Laplace eigenfunctions on $\mathbb{S}^{m-1}$ are indeed Laplace eigenfunctions on $\partial\mathbf{B}_2$ (see \cite[Theorem 3.1]{castillon2019spectral}, or \cite[Corollary 5.5]{bergery1982laplacians}) and it consists of a basis of $L^2(\partial\mathbf{B}_2)$, our mixed Steklov-Dirichlet eigenfunctions restrict to $\partial \mathbf{B}_2$ become a basis of $L^2(\partial \mathbf{B}_2)$. It implies every mixed Steklov-Dirichlet eigenfunction is written by a product of a Laplacian eigenfunction and a radial function.  Then some computations as in \cite[Section 2.1]{verma2018eigenvalue}, we can show that the first mixed Steklov-Dirichlet eigenfunction is corresponding to the first Laplacian eigenfunction as the following lemma. Here we count constant function as the first Laplacian eigenfunction on $\mathbb{S}^{m-1}$.

\begin{lemma}
Let $a_1 : [R_1, \infty) \rightarrow \mathbb{R}^+ \cup \{0\}$ be the nonnegative function that obtained from Lemma \ref{a(r)} when the given Laplacian eigenfunction is constant. Then we have
\begin{align*}
    \frac{a_1'(R_2)}{a_1(R_2)} \le \frac{a'(R_2)}{a(R_2)},
\end{align*}
and the equality holds if and only if $f$ is constant. Here we used notations $a,f$ in Lemma \ref{a(r)}.
\end{lemma}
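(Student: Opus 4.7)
The plan is a Sturm-type Wronskian comparison between the radial ODEs governing $a_1$ and $a$. From (\ref{laplacian}), the harmonicity of $u = a \cdot f$ is equivalent to
\[ a''(r) + (m-1) h(r) a'(r) - \lambda(S(r)) a(r) = 0, \]
and analogously for $a_1$ with the term $\lambda(S(r))$ replaced by $0$, since the constant eigenfunction on $\mathbb{S}^{m-1}$ has Laplacian eigenvalue zero. By Lemma \ref{a(r)} both $a_1$ and $a$ vanish at $R_1$ and are strictly positive on $(R_1, R_2]$; moreover, the identification of Laplace eigenfunctions on $\mathbb{S}^{m-1}$ with those on $\partial \mathbf{B}_2$ discussed just after Lemma \ref{a(r)} ensures that $\lambda(S(r)) \ge 0$ for every $r$, with equality precisely when $f$ is constant.

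Set $W(r) := a_1'(r) a(r) - a_1(r) a'(r)$. Substituting the two ODEs into $W'(r) = a_1''(r) a(r) - a_1(r) a''(r)$ yields
\[ W'(r) + (m-1) h(r) W(r) = -\lambda(S(r)) a_1(r) a(r). \]
Since $(m-1) h(r) = \omega'(r)/\omega(r)$ for the geodesic-sphere density $\omega$, the integrating factor is $\omega$, and we obtain
\[ (\omega W)'(r) = -\lambda(S(r)) \omega(r) a_1(r) a(r) \le 0. \]
Because $W(R_1) = 0$, integrating from $R_1$ to $R_2$ yields $\omega(R_2) W(R_2) \le 0$, hence $W(R_2) \le 0$. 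Dividing by $a_1(R_2) a(R_2) > 0$ produces the desired inequality $a_1'(R_2)/a_1(R_2) \le a'(R_2)/a(R_2)$.

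For the equality case, $W(R_2) = 0$ together with $(\omega W)'(s) \le 0$ forces $(\omega W)'(s) \equiv 0$ on $[R_1, R_2]$, so $\lambda(S(r)) \omega(r) a_1(r) a(r) \equiv 0$. Since $\omega, a_1, a > 0$ on $(R_1, R_2]$, this requires $\lambda(S(r)) \equiv 0$ there, which by the remark above holds precisely when $f$ is constant.

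The main technical point I expect to verify carefully is that $\lambda(S(r)) \ge 0$ for every non-round geodesic sphere $S(r)$ in a CROSS or nCROSS --- this is not formally automatic from the nonnegativity of Laplace eigenvalues on the round $\mathbb{S}^{m-1}$, but should follow from the compatibility of the separation of variables used in Lemma \ref{a(r)} with the Berger-type metric that (\ref{metric}) induces on $S(r)$, together with the fact that a nonconstant eigenfunction on $\mathbb{S}^{m-1}$ pulls back to a nonconstant eigenfunction on each $S(r)$.
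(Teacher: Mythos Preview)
Your argument is correct and is essentially the same Wronskian comparison as the paper's proof: both compute $(\omega W)'=-\lambda(S(r))\,\omega\,a_1\,a\le 0$ with $W=a_1'a-a_1a'$, integrate from $R_1$, and divide by $a_1(R_2)a(R_2)$. Your closing caveat is overcautious: $\lambda(S(r))\ge 0$ needs no compatibility argument, since $S(r)$ is a closed Riemannian manifold and the Laplace--Beltrami operator on any such manifold has nonnegative spectrum, with eigenvalue $0$ attained exactly by the constants.
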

\begin{proof}
    From the harmonicity of mixed Steklov-Dirichlet eigenfunctions we obtained the following equations.
\begin{align}\label{har1}
    a_1''(r) + \frac{\omega'(r)}{\omega(r)}a_1'(r)&=0
\end{align}
\begin{align}\label{har2}
    a''(r)+ \frac{\omega'(r)}{\omega(r)}a'(r)-\lambda(S(r))a(r)&=0.
\end{align}
Then $((\ref{har1})\times a - (\ref{har2})\times a_1 )\times \omega(r)$ implies
\begin{align*}
    0&= \omega a_1''a-a_1a''+\omega'a_1'a-\omega'a_1a'+\lambda(S(r))\omega a_1a \\
    &\ge \omega a_1''a-a_1a''+\omega'a_1'a-\omega'a_1a' \\
    &= (\omega(a_1a'-a_1a'))'.
\end{align*}
Note that the equality of the inequality holds if and only if $\lambda(S(r))=0$ that is $f$ is constant.
Since $a_1(R_1)=a(R_1)=0$, we have 
\begin{align*}
    a(R_2)a_1'(R_2)-a'(R_2)a_1(R_2) \le 0,
\end{align*}
or
\begin{align*}
    \frac{a'(R_2)}{a(R_2)}\le \frac{a_1'(R_2)}{a_1(R_2)}.
\end{align*}\qed
\end{proof}

We can easily observe that
\begin{align*}
    \frac{a'(R_2)}{a(R_2)}
\end{align*}
is the Steklov eigenvalue corresponding to $u$. Now we can compute the first mixed Steklov-Diriclet eigenfunction on $\mathbf{B}_2\setminus \text{cl}(\mathbf{B}_1)$ as follows. 

Since the first Laplacian eigenfunctions are constants, we obtain the following. We abuse notation $a_1$ by $a$ for convenience.

\begin{proposition} \label{firsteigenfunction}
    Let $r_X : \mathbf{B}_2'\rightarrow [0,\infty)$ be the distance function from $X$. Let $a : [R_1, \infty) \rightarrow \mathbb{R}$ be a function defined by 
    \begin{align*}
        a(r) = \int_{R_1}^r \frac{1}{\omega(t)}dt.
    \end{align*}
    Then the first mixed Steklov-Dirichlet eigenfunction in $\mathbf{B}_2\setminus \textnormal{cl}(\mathbf{B}_1)$ is $a\circ r_C$ up to constant.
\end{proposition}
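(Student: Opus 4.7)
The plan is to invoke the two preceding lemmas in order to reduce the statement to a single ODE integration. By Lemma \ref{a(r)}, every mixed Steklov-Dirichlet eigenfunction in $\mathbf{B}_2 \setminus \textnormal{cl}(\mathbf{B}_1)$ has the form $a(r) f(\theta_1, \dots, \theta_{m-1})$ with $f$ a Laplacian eigenfunction on $\mathbb{S}^{m-1}$ and $a$ a radial factor satisfying $a(R_1) = 0$. The lemma just preceding the proposition then identifies the first such eigenfunction as the one arising from the constant Laplacian eigenfunction, so that $\lambda(S(r)) \equiv 0$ and the radial ODE (\ref{har1}) specializes to
\begin{equation*}
a_1''(r) + \frac{\omega'(r)}{\omega(r)} a_1'(r) = 0,
\end{equation*}
which can be rewritten in conservation form as $\bigl(\omega(r) a_1'(r)\bigr)' = 0$.

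From here the proof is essentially a direct integration. One integration yields $\omega(r) a_1'(r) = K$ for some constant $K$, so $a_1'(r) = K/\omega(r)$. A second integration from $R_1$ to $r$, combined with the Dirichlet boundary condition $a_1(R_1) = 0$ from Lemma \ref{a(r)}, gives
\begin{equation*}
a_1(r) = K \int_{R_1}^{r} \frac{dt}{\omega(t)} = K \cdot a(r).
\end{equation*}
Since $a_1$ is not identically zero and is required to be nonnegative by Lemma \ref{a(r)}, the constant $K$ must be positive. Pulling back by the distance map $r_C$ then shows that the first eigenfunction equals $a \circ r_C$ up to a positive multiplicative constant, as claimed.

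There is no substantive obstacle at this step: both the radial harmonic equation and the Dirichlet boundary condition have already been isolated in Lemma \ref{a(r)}, and the reduction to the constant angular mode has already been carried out in the subsequent lemma. The only real content of the proposition is the explicit formula for $a$, which falls out immediately upon recognizing that the ODE (\ref{har1}) is first-order in $a_1'$ and admits the obvious integrating factor $\omega$.
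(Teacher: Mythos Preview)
Your proof is correct and follows essentially the same route as the paper: reduce to a radial eigenfunction via the two preceding lemmas (and the intervening discussion), then integrate the harmonic ODE $(\omega a')'=0$ with the Dirichlet condition $a(R_1)=0$. The only minor point is that the claim ``every eigenfunction has separated form'' is not the content of Lemma~\ref{a(r)} itself but of the paragraph following it; otherwise your argument matches the paper's.
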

\begin{proof}
    By the argument in the paragraph, the first eigenfunction can be written by 
    \begin{align*}
        a\circ r_C,
    \end{align*}
    where $a:[R_1, \infty) \rightarrow \mathbb{R}$ is a real-valued function. Then, the harmonicity of the eigenfunction implies 
    \begin{align*}
        0=\Delta a(r) = a^{\prime\prime}(r)+\frac{\omega^\prime(r)}{\omega(r)}a^\prime(r) = \frac{1}{\omega(r)}(a^\prime(r)\omega(r))^\prime.
    \end{align*}
    Here, we used $r$ instead of $r_C$ for simplicity of notation. With the fact that $a(R_1)=0$ from the boundary condition, we obtain the formula of $a(r)$ up to constant. \qed
\end{proof}

\subsection{Crucial lemmas and the proof for nCROSS} \label{section3.2}
We begin with two definitions (see Fig. \ref{descriptionofdirectionandangle}).
\begin{definition}
    For given $X\in \mathbf{B}_2^\prime$, a vector-valued function $\vec{v}_X: M\setminus{\{X\}}\rightarrow T_XM$ is defined by $P\in M\setminus \{X\}$ and $\vec{v}_X(P)\in T_XM$ such that $\vec{v}_X(P)$ is the unit tangent vector of the geodesic ray $\vv{XP}$ at $X$.   
\end{definition}

\begin{definition}
    For given $X\in \mathbf{B}_2^\prime$, an angle function with respect to $X$, $\lambda_X : \partial \mathbf{B}_2^\prime \rightarrow [0,\pi]$, is a map that assigns to each $P \in \partial \mathbf{B}_2^\prime$ an angle $\lambda(P)$ of the triangle (PXC).
\end{definition}

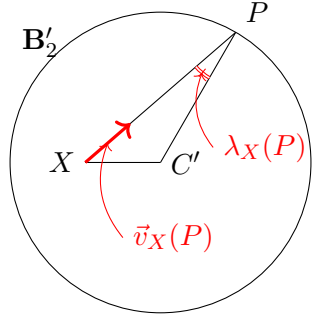
\begin{figure}
    \begin{center}
        \begin{tikzpicture}
        \coordinate [label=right : $C'$] (C') at (0,0);
        \coordinate (D) at (2,0);
        \coordinate (X) at (-1,0);
        \node [left] at (X) {$X$};
        \draw[name path =circle](C') circle(2);
        \coordinate  [label=above right : $P$] (P) at ($(C)!1!60:(D)$);
        \draw (P)--(C'); 
        \draw (X)--(C');
        \draw (X)--(P);
        \tkzMarkAngle[size=0.7,double, color=red](X,P,C');
        \coordinate (P_s) at ($(X)!0.3!(P)$);
        \draw [->, very thick, red] (X) -- (P_s);
        \node at (-1.6,1.6){$\mathbf{B}_2^\prime$};
        
        {\color{red}
        \path[->] (-0.5,-1) edge [bend left] ($(X)!0.5!(P_s)$);
        \node[right] at (-0.5,-1) {$\vec{v}_X(P)$};
        \path[->]  (0.7, 0.2) edge [bend left] (0.5542,1.1923);
        \node[right] at (0.7, 0.2) {$\lambda_X(P)$};
        }
        \end{tikzpicture}
    \end{center}
    \caption{Description of $\vec{v}_X(P)$ and $\lambda_X(P)$}
    \label{descriptionofdirectionandangle}
\end{figure}

For a given parametrization of $M$ around $X$, we can identify $T_XM$ with $\mathbb{R}^m$. Then we can give the following definition.
\begin{definition}
    For given $X\in \mathbf{B}_2^\prime$ and a parametrization of $M$ around $X$, a map $\pi_X : \mathbb{S}^{m-1}\subset T_XM \rightarrow \partial \mathbf{B}_2^\prime$ is defined by $\pi_X(v) = \textnormal{exp}_X([0,\infty)\cdot v) \cap \partial \mathbf{B}_2^\prime$, i.e. $\pi_X(v)$ is the point of $\partial \mathbf{B}_2'$ in the geodesic emanating from $X$ in $v$ direction.
\end{definition}

Note that $\pi_X$ has the inverse map. Thus, for any $P\in \partial \mathbf{B}_2^\prime$, we can find $p_s\in \mathbb{S}^{m-1}$ such that $P=\pi_X(p_s)$. Let $c'_s\in\mathbb{S}^{m-1}$ such that the geodesic ray $\text{exp}_X([0,\infty)\cdot {c_s'})$ passes through $C'$. Then we can define $-p_s \in \mathbb{S}^{m-1}$ such that it is the symmetric points of $p_s$ with respect to $X$. We define $\bar{p_s}\in \mathbb{S}^{m-1}$ such that it is the symmetric point of $p_s$ with respect to the line passing through $X$ and $c_s'$ in the plane spanned by the vectors $p_s$ and $c_s'$. In addition, $-\bar{p_s}$ can be defined as the symmetric point of $\bar{p_s}$ with respect to $X$. Now we denote $\text{exp}_X(-p_s)$, $\text{exp}_X(\bar{p_s})$, and $\text{exp}_X(-\bar{p_s})$ by $-P$, $\bar{P}$, and $-\bar{P}$, respectively. Fig. \ref{points} explains the situation.

\begin{figure}[hbt!]
\centering
    \hspace*{-1.5cm}
    \begin{tikzpicture}
    \coordinate [label=right : $C'$] (C') at (0,0);
        \coordinate (D) at (2,0);
        \coordinate (X) at (-1,0);
        \node [left] at (X) {$X$};
       
        \coordinate  [label=above right : $P {=} \pi_X(p_s)$] (P) at ($(C')!1!60:(D)$);
        \coordinate  [label=below right : $\bar{P}$] (bP) at ($(C')!1!-60:(D)$);
        \path[name path=line1] let \p1=($(P)-(X)$), \n1={atan2(\y1,\x1)} in (X)--($(X)+(180+\n1 : 2*2cm)$);
        \path[name path=line2] let \p2=($(bP)-(X)$), \n2={atan2(\y2,\x2)} in (X)--($(X)+(180+\n2 : 2*2cm)$);
        \path[name path=line0] (X)--(P);
        \path[name path=line3] (X)--(bP);

    \draw[name path = circle2](C') circle(2cm);
    \draw[name path =circle1, dotted](X) circle (0.5cm);
            \draw[name intersections={of=line1 and circle2,by={-P}}] (-P) node[ left]{$-P$}--(X);
        \draw[name intersections={of=line2 and circle2,by={-bP}}] (-bP) node[ left]{$-\bar{P}$}--(X);
        \path[name path=line3] (X)--(bP);
        \path[name path=linec] (X)--(C');
        
    \draw (X)--(P);
    \draw (X)--(C');
    \draw (X)--(bP);
    \fill[name intersections={of=line0 and circle1, by={Ps}}] {(Ps) circle (1pt) node[above]{$p_s$}};
     \fill[name intersections={of=line2 and circle1, by={-bPs}}] {(-bPs) circle (1pt) node[above]{$-\bar{p_s}$}};
      \fill[name intersections={of=line1 and circle1, by={-Ps}}] {(-Ps) circle (1pt) node[below]{$-p_s$}};
      \fill[name intersections={of=line3 and circle1, by={bPs}}] {(bPs) circle (1pt) node[below]{$\bar{p_s}$}};
      \fill[name intersections={of=linec and circle1, by={Cs'}}] {(Cs') circle (1pt) node[below right]{$c_s'$}};
    
    \end{tikzpicture}
    \caption{Description of $P,\bar{P},-P$, and $-\bar{P}$. The bigger circle represents $\partial \mathbf{B}_2'$ and the dotted circle represents
$\mathbb{S}^{m-1}\subset \mathbb{R}^m$ identified by $\mathbb{R}^m \cong T_XM$ via given parametrization of $M$}
    \label{points}
\end{figure}
\subsubsection{Properties of angles and distances} \label{section3.2.1}
In this section, we prove the lemmas which are essential in the proof of the main proposition in the next section. We prove a lemma about the ``symmetric properties" of angles and distances. In addition, we obtain a lemma which is motivated from the concept of solid angle. As a corollary, we introduce Newton's shell theorem with an infinitesimally thin ``shell" in ROSS. We begin with a lemma, which are useful for the lemmas below.

\begin{lemma}\label{rightangle}
    A triangle $(PQR)$ in ROSS $M$ with the metric $\textnormal{(\ref{metric})}$ satisfies :
     \begin{enumerate}[label=(\alph*)]
     \item If $M=\mathbb{S}^m$, $0<p,q,r<\pi$, and $p\le q <\frac{\pi}{2}$, then $\lambda(P)<\frac{\pi}{2}$.
     \item If $M=\mathbb{R}P^n, \mathbb{C}P^n, \mathbb{H}P^n,$ or $\mathbb{O}P^2$, $0<p,q,r<\frac{\pi}{2}$, and $p\le q <\frac{\pi}{4}$, then $\lambda(P)<\frac{\pi}{2}$.
     \item If $M$ is nCROSS, $0<p,q,r$, and $p\le q$, then $\lambda(P)<\frac{\pi}{2}$.
     \end{enumerate}
\end{lemma}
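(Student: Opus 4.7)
My plan is to reduce each case to the law of cosines in a constant-curvature model and show by a direct sign computation that $\cos\lambda(P)>0$. For case (a), the spherical law of cosines on $\mathbb{S}^m$ gives
\[
\cos\lambda(P)=\frac{\cos p-\cos q\cos r}{\sin q\sin r},
\]
so it suffices to show $\cos p>\cos q\cos r$. Since $p\le q<\pi/2$, $\cos p\ge\cos q>0$; if $0<r\le\pi/2$, then $0\le\cos r<1$ (strictly, since $r>0$) and $\cos q\cos r<\cos q\le\cos p$, while if $\pi/2<r<\pi$, then $\cos q\cos r<0<\cos p$. The $\mathbb{R}P^n$ portion of (b) reduces to (a) because all sides lie below $\textnormal{inj}(\mathbb{R}P^n)=\pi/2$ and the triangle sits in a normal neighborhood locally isometric to a spherical ball. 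The $\mathbb{R}H^m$ portion of (c) is the analogous hyperbolic calculation: $\cos\lambda(P)=(\cosh q\cosh r-\cosh p)/(\sinh q\sinh r)>0$ because $\cosh p\le\cosh q$ and $\cosh r>1$.

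For the remaining ROSSs the sectional curvature is variable but pinched by \eqref{curvature}. I plan to invoke Alexandrov / Rauch II triangle comparison against the \emph{upper} curvature bound: if $K_M\le\kappa$ and (when $\kappa>0$) the perimeter is less than $2\pi/\sqrt{\kappa}$, then the comparison triangle $(\tilde P\tilde Q\tilde R)$ in the model space of constant curvature $\kappa$ with the same side lengths satisfies $\lambda(P)\le\tilde\lambda(\tilde P)$. For $\mathbb{C}P^n,\mathbb{H}P^n,\mathbb{O}P^2$ take $\kappa=4$; the model is the sphere of radius $\tfrac12$, whose law of cosines reads $\cos(2p)=\cos(2q)\cos(2r)+\sin(2q)\sin(2r)\cos\tilde\lambda(\tilde P)$, and the hypothesis rescales to $2p\le 2q<\pi/2$ and $0<2r<\pi$, so running the case (a) argument on $2p,2q,2r$ yields $\tilde\lambda(\tilde P)<\pi/2$. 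For the corresponding nCROSSs take $\kappa=-1$ and reduce to the hyperbolic computation above; no perimeter restriction is needed since the upper bound is negative.

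The step requiring the most care is verifying the perimeter hypothesis in the $\kappa=4$ comparison: the combination $p\le q<\pi/4$ with $r<\pi/2$ gives exactly $p+q+r<\pi=2\pi/\sqrt{4}$, which clarifies why the lemma tightens the bound on $q$ from $\pi/2$ to $\pi/4$; no such issue arises in (c) since the relevant upper curvature bound is negative. An alternative, should the comparison approach prove awkward to cite cleanly, is to appeal directly to the Hsiang--Brehm laws of cosines in ROSSs and absorb the extra $\varphi(P)$-dependent terms by sign; however, the comparison route above bypasses that bookkeeping entirely.
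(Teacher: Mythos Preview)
Your proposal is correct and follows essentially the same route as the paper: the spherical law of cosines for (a), and Alexandrov/CAT($\kappa$) angle comparison against the upper curvature bound ($\kappa=4$ for (b), $\kappa=-1$ for (c)) to reduce to the constant-curvature computation. The only cosmetic differences are that you argue directly ($\cos\lambda(P)>0$) rather than by contradiction, and you peel off the constant-curvature subcases $\mathbb{R}P^n$ and $\mathbb{R}H^n$ separately, whereas the paper folds them into the comparison step; your explicit verification of the perimeter bound $p+q+r<\pi$ for the $\kappa=4$ comparison is a point the paper uses but does not spell out.
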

\begin{proof}
    \begin{enumerate}[label=(\alph*)]
        \item Suppose $\lambda(P)\ge \frac{\pi}{2}$. Using the law of cosines of spherical triangles (see p. 179 in  \cite{karcher1989comparison}),
        \begin{align*}
            \cos{p} = \cos{q}\cos{r}+\sin{q}\sin{r}\cos{P} < \cos{q}\cos{r}.
        \end{align*}
         Combining the previous inequality with $\cos{p},\cos{q}>0$, we obtain $\cos{r}>0$ and $\cos{p}<\cos{q}$. It implies $p>q$, contradiction to our assumption.
         
         \item Suppose $\lambda(P)\ge \frac{\pi}{2}$. Since $M$ has sectional curvature $K_M \le 4$ as in (\ref{curvature}), we can apply the triangle comparison theorem (see p. 197 in \cite{karcher1989comparison}). 
         \begin{align*}
             \cos{2p} \le \cos{2q}\cos{2r}+\sin{2q}\sin{2r}\cos{P}<\cos{2q}\cos{2r}.
         \end{align*}
         Then by an analogous argument in (a), we obtain a contradiction. 
         
         \item Suppose $\lambda(P)\ge \frac{\pi}{2}$. Since $M$ has sectional curvature $K_M \le -1$ as in (\ref{curvature}), we can apply the triangle comparison theorem (see p. 197 in \cite{karcher1989comparison}). 
         \begin{align*}
             \cosh{p} \ge \cosh{q}\cosh{r}-\sinh{q}\sinh{r}\cos{P}>\cosh{q}.
         \end{align*}
         Thus $p>q$, which contradicts to our assumption.
    \end{enumerate} \qed
\end{proof}

For $P\in \partial \mathbf{B}_2^\prime$, consider a triangle $(PXC')$ in $\textnormal{cl}(\mathbf{B}_2^\prime)$ defined in the beginning of Section 3, which consists of $X\in \mathbf{B}_2'$, the center $C'$ of $\mathbf{B}_2^\prime$, $P$, and geodesics connecting two of them. Then the next lemma explains relations of distances from X to $P,\bar{P},-P$, and $-\bar{P}$ and relations of angles at those points. 

\begin{figure}[!htb]
\minipage{0.33\textwidth}
  \includegraphics[width=\linewidth]{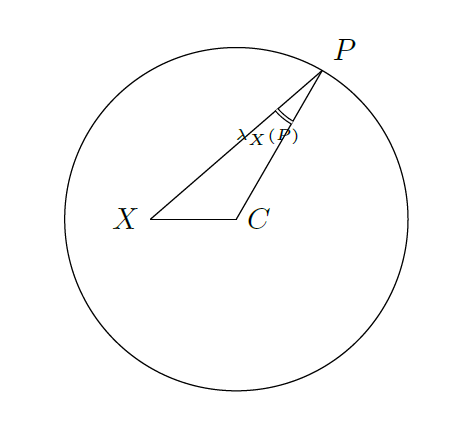}
\endminipage\hfill
\minipage{0.33\textwidth}
  \includegraphics[width=\linewidth]{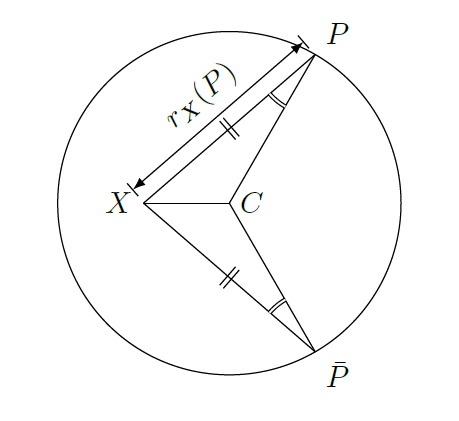}
\endminipage\hfill
\minipage{0.33\textwidth}%
  \includegraphics[width=\linewidth]{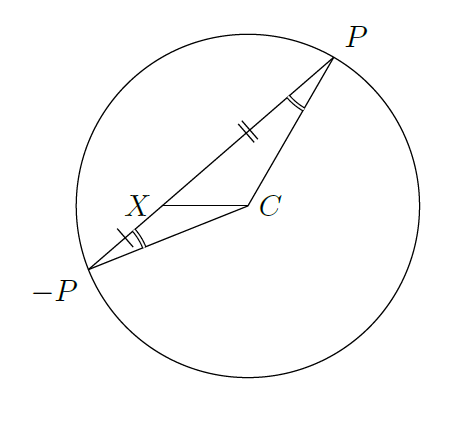}
\endminipage
\caption{Illustration of Lemma \ref{symmetry}. The circles represent $\partial \mathbf{B}_2'$}
\label{Lemma explained}
\end{figure}

\begin{lemma}\label{symmetry}
    Let $\lambda_X : \partial \mathbf{B}_2^\prime \rightarrow [0,\pi]$ be an angle function with respect to $X$ that assigns to each $P \in \partial \mathbf{B}_2^\prime$ an angle $\lambda(P)$ of the triangle (PXC'). Define $r_X$ as in the Proposition \ref{firsteigenfunction}. Then, $\lambda_X$ and $r_X$ satisfy the following. 
    \begin{enumerate}[label=(\alph*)]
        \item $0 \le \lambda_X(P) <\frac{\pi}{2}$.
        \item  $\lambda_X(P) = \lambda_X(\bar{P})$, $r_X(P)=r_X(\bar{P})$ for all $P\in \partial \mathbf{B}_2^\prime$. 
        \item $\lambda_X(P) = \lambda_X(-P)$, $r_X(P)\ge r_X(-P)$ for all $P\in \partial \mathbf{B}_2^\prime$ satisfying\\ $\angle(\vec{v}_X(P), \vec{v}_X(C')) \le \frac{\pi}{2}$. The equality holds if and only if $\angle(\vec{v}_X(P), \vec{v}_X(C')) = \frac{\pi}{2}$. 
    \end{enumerate}

\end{lemma}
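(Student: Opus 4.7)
All three parts rely on the $K$-orbit structure recalled in Section~\ref{section2.2}: inside the isotropy group $K_X$, the stabilizer of a unit vector $c_s'\in T_X M$ acts on $T_X M$ with orbits being exactly the level sets of the pair of invariants $(\lambda,\varphi)$ taken relative to $c_s'$. Part (a) is then immediate from Lemma~\ref{rightangle} applied to the triangle $(PXC')$: since $X$ is interior to $\mathbf{B}_2'$, the side $XC'$ opposite $P$ has length $d(X,C')<R_2$, while the side $PC'$ opposite $X$ has length exactly $R_2$, so the hypothesis $p\le q$ of that lemma is satisfied; the remaining size constraints are automatic from $R_2<\textnormal{inj}(M)/2$.

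For (b), I write $p_s=ac_s'+bw$ with $|w|=1$, $w\perp c_s'$ in the real 2-plane $\textnormal{span}(p_s,c_s')$, so that by definition $\bar{p_s}=ac_s'-bw$. The Riemannian angle $\lambda(\cdot,c_s')$ is clearly preserved by this 2D reflection. For $\varphi$, the projections onto $\mathbb{K}\cdot c_s'$ are $ac_s'\pm b\,\textnormal{proj}_{\mathbb{K}c_s'}(w)$, so the orthogonal residues of $p_s$ and $\bar{p_s}$ have the same length, i.e.\ $\varphi(\bar{p_s},c_s')=\varphi(p_s,c_s')$. Hence $p_s$ and $\bar{p_s}$ lie in a common orbit of the stabilizer of $c_s'$ in $K_X$, giving an isometry $g$ of $M$ with $g(X)=X$, $g(p_s)=\bar{p_s}$, and (since $g$ fixes the direction $c_s'$) $g(C')=C'$. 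Then $g(P)=\bar{P}$, so both equalities in (b) follow from $g$ being an isometry.

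For (c), since $p_s$ and $-p_s$ span one geodesic through $X$, the three points $P,X,-P$ lie on a common geodesic with $X$ strictly between $P$ and $-P$; consequently the segment $P(-P)$ has length $d_1+d_2$, where $d_1=r_X(P)$ and $d_2=r_X(-P)$, and $\lambda_X(P),\lambda_X(-P)$ are the two base angles of the isoceles triangle $(PC'(-P))$ with $PC'=(-P)C'=R_2$. The angle equality $\lambda_X(P)=\lambda_X(-P)$ then follows from the existence of an isometry of $M$ that fixes $C'$ and swaps $P,-P$: letting $M_0$ be the midpoint of $P(-P)$, a first-variation / law-of-cosines argument on $t\mapsto d(C',\gamma(t))$ shows the minimum is attained at $M_0$, so $\vv{M_0C'}\perp\vv{M_0P}$; both of $\pm\vv{M_0P}$ have invariants $(\pi/2,\varphi_0)$ relative to $\vv{M_0C'}$, so the stabilizer of $\vv{M_0C'}$ in $K_{M_0}$ contains an element sending one to the other, and this element fixes $C'$ (which lies on the geodesic from $M_0$ in direction $\vv{M_0C'}$) while swapping $P$ and $-P$.

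For the inequality $r_X(P)\ge r_X(-P)$ under $\lambda_1:=\angle(\vec{v}_X(P),\vec{v}_X(C'))\le\pi/2$, I will compare the law of cosines at $X$ in the two triangles $(PXC')$ and $(-PXC')$: by~(\ref{180}) and~(\ref{180_2}) the $\lambda$-angles at $X$ are $\lambda_1$ and $\pi-\lambda_1$ while the $\varphi$-angles agree; the adjacent side $\rho=d(X,C')$ is common; and the side opposite $X$ is $R_2$ in both. In the real cases this gives, for instance in $\mathbb{S}^m$, $(\cos d_2-\cos d_1)\cos\rho=(\sin d_1+\sin d_2)\sin\rho\cos\lambda_1\ge 0$, forcing $d_1\ge d_2$ with equality iff $\lambda_1=\pi/2$; the cases $\mathbb{R}^m$, $\mathbb{R}P^n$ and real nCROSS are analogous. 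The main obstacle is the non-real ROSS ($\mathbb{C}P^n,\mathbb{H}P^n,\mathbb{O}P^2$ and their non-compact duals), where the law of cosines carries an extra $\varphi$-dependent term; I plan to use the explicit formulas of~\cite{brehm1990shape} to establish monotonicity in $\lambda\in[0,\pi]$ of the side opposite $X$ at fixed $\rho,\varphi_1$, from which $R_2=f(d_1,\lambda_1)=f(d_2,\pi-\lambda_1)$ together with $\lambda_1\le\pi-\lambda_1$ would force $d_1\ge d_2$.
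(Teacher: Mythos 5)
Parts (a) and (b) of your argument are sound. For (a) you apply Lemma \ref{rightangle} to $(PXC')$ exactly as the paper does, using $|XC'|<|PC'|=R_2$. For (b) you work directly with the isotropy orbit structure: $p_s$ and $\bar{p_s}$ have the same invariants $(\lambda,\varphi)$ relative to $c_s'$, hence an isometry in the stabilizer of $c_s'$ inside $K_X$ fixes $X$ and $C'$ and carries $P$ to $\bar{P}$. The paper packages the same fact as a triangle congruence, applying Proposition \ref{congruence} to $(PXC')$ and $(\bar{P}XC')$ after checking the relevant side conditions; the two routes are equivalent in substance, and yours is arguably cleaner since it bypasses that bookkeeping.

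Part (c) has two genuine gaps, both concentrated in the non-real ROSS ($\mathbb{C}P^n$, $\mathbb{H}P^n$, $\mathbb{O}P^2$ and their noncompact duals), which is exactly where the difficulty lives. First, you \emph{define} $M_0$ as the midpoint of $P(-P)$ and assert that a ``first-variation / law-of-cosines argument'' shows $t\mapsto d(C',\gamma(t))$ is minimized there. First variation only tells you that at the minimizing point the geodesic meets the segment to $C'$ perpendicularly; it does not locate that point at the midpoint. That the foot of the perpendicular bisects the chord is precisely the nontrivial content: in a non-real ROSS a triangle is not determined by its three side lengths, so ``isoceles implies equal base angles'' and ``the perpendicular from the apex bisects the base'' are not free, and the law of cosines you would invoke carries the $\varphi$-dependent correction term. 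Second, for $r_X(P)\ge r_X(-P)$ you complete the computation only in the constant-curvature cases and explicitly defer the others (``I plan to use the explicit formulas of Brehm''), where the claimed monotonicity in $\lambda$ at fixed $(\rho,\varphi)$ is exactly what remains to be verified. The paper resolves both issues at once and uniformly: let $R$ be the foot of the perpendicular from $C'$ to the complete geodesic through $P$ and $-P$ (it lies in $\mathbf{B}_2'$ by convexity, and on the segment $XP$ by Lemma \ref{rightangle} together with the hypothesis $\angle(\vec{v}_X(P),\vec{v}_X(C'))\le \frac{\pi}{2}$). The triangles $(PRC')$ and $(-PRC')$ share the side $RC'$, satisfy $|C'P|=|C'(-P)|=R_2$, and have identical angle invariants at $R$ by (\ref{180}) and (\ref{180_2}) (both Riemannian angles there equal $\frac{\pi}{2}$), so Proposition \ref{congruence} makes them congruent. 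This yields $|RP|=|R(-P)|$ and $\lambda_X(P)=\lambda_X(-P)$ simultaneously, whence $r_X(P)-r_X(-P)=2|XR|\ge 0$ with equality iff $X=R$, i.e.\ iff $\angle(\vec{v}_X(P),\vec{v}_X(C'))=\frac{\pi}{2}$. Your own orbit argument goes through verbatim if you run it at $R$ (the foot of the perpendicular, where the $\pi/2$ angle is available by construction) instead of at the midpoint $M_0$, whose characterization is what you were trying to prove.
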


\begin{proof} We will prove this lemma when $M = \mathbb{C}P^n, \mathbb{H}P^n$ or $\mathbb{O}P^2$. Then we have $R_2 < \frac{inj (M)}{2}=\frac{\pi}{4}$.
    
    \begin{enumerate}[label=(\alph*)]
    \item Note that $R_2 <\frac{\pi}{4}$ and $|C'X|<|C'P|=R_2$. Then the statement follows from Lemma \ref{rightangle}.
    
    \item Consider two triangles $(PXC')$ and $(\bar{P}XC')$. By the constructions of $P$ and $\bar{P}$, $\lambda(X)$ of $(PXC')$ and $(\bar{P}XC')$ are identical. The same holds for $\varphi(X)$. Note that the two triangles have the common edge $XC'$ and $|C'P|=|C'\bar{P}|=R_2$. From the fact that $|C'X|<|C'P|=R_2<\frac{\pi}{4}$ we have $\sin{|C'X|}<\cos{|C'P|}$. Therefore by Proposition \ref{congruence}, $(PXC')$ and $(\bar{P}XC')$ are congruent. Then our statement follows.
    
    \item Using the fact that $\mathbf{B}_2^\prime$ is convex (see p. 148 in \cite{petersen2006riemannian}), we can define a point $R \in \mathbf{B}_2^\prime$ in the complete geodesic containing $X$ and $P$ such that the geodesic meets $C'R$ perpendicularly. We claim that $\lambda(\vv{XR},\vv{XC'})\le \frac{\pi}{2}$. If $X=R$, $\lambda(\vv{XR},\vv{XC'})= \frac{\pi}{2}$. Otherwise, we have $|RC'|<|XC'|<\frac{\pi}{4}$. Then by Lemma \ref{rightangle} for $(XC'R)$, our claim follows. Then the condition on $P$ in the statement implies $R\in XP$, so $|PR|\le |PX|$. On the other hand, two triangles $(PRC')$  and $(-PRC')$ are congruent by (\ref{180}),(\ref{180_2}), and Proposition \ref{congruence} as in the proof of (b). Thus we obtain that $\lambda_X(P)=\lambda_X(-P)$ and $|PR|=|-PR|$, which imply the desired conclusion. 
     
    \end{enumerate}
     A slight change in the proof shows it also holds if $M$ is $\mathbb{S}^m, \mathbb{R}P^n$ or nCROSSs. \qed
\end{proof}

Now we will give another lemma that explains an ``infinitesimal area of $\partial \mathbf{B}_2$ from $X$" can be calculated by $\lambda_X$ and $r_X$.  

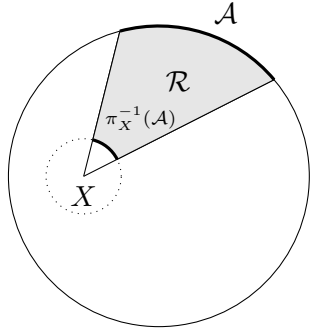
\begin{figure}[htb!]
    \hspace{1cm}
    \begin{tikzpicture}
     \coordinate (C) at (0,0);
    \coordinate[label =below : $X$] (X) at (-1,0);
    \coordinate (D) at (2,0);
    \coordinate (P1) at ($(C)!1!105:(D)$);
    \coordinate (P2) at ($(C)!1!40:(D)$);
    \path[name path=line2] let \p2=($(P1)-(X)$), \n2={atan2(\y2,\x2)} in (X) --($(X)+(180+\n2 :2*2cm)$);
    \path[name path=line3] let \p3=($(P2)-(X)$), \n3={atan2(\y3,\x3)} in (X) --($(X)+(180+\n3:2*2cm)$);
    \draw[name path = circle2](C) circle(2cm);
    \draw[name path =circle1, dotted](X) circle (0.5cm);
    \draw (P1)--(X);
    \draw (P2)--(X);
    \draw[fill = gray!20] let \p2=($(P1)-(X)$), \n2={atan2(\y2,\x2)},\p3=($(P2)-(X)$), \n3={atan2(\y3,\x3)} in (X) + (\n3 : 0.5) arc (\n3 : \n2 : 0.5)
    -- (P1) arc (105 :40 : 2)--cycle;
    \node[above right] at ($(C)!1!72.5:(D)$) {$\mathcal{A}$};
    \node[above right, font=\tiny] at ($(X)!1!72.5:(-0.5,0)$) {$\pi_X^{-1}(\mathcal{A})$};
    \draw[very thick] let \p2=($(P1)-(X)$), \n2={atan2(\y2,\x2)},\p3=($(P2)-(X)$), \n3={atan2(\y3,\x3)} in (X) + (\n3 : 0.5) arc (\n3 : \n2 : 0.5);
    \draw[very thick]  (P1) arc (105 : 40 : 2);
    \node[above right] at ($(C)!1!92.5:(1,0)$) {$\mathcal{R}$};
    \end{tikzpicture}
    \vspace*{-17mm}
    \caption{Description of $
    \mathcal{R}$ in the proof of Lemma \ref{radon}. The dotted circle and the bigger circle represent $\partial\mathbf{B}_1$ and $\partial \mathbf{B}_2'$, respectively.}
\end{figure}

\begin{lemma} \label{radon}
    Let $\mu$ be the Lebesgue measure on $\mathbb{S}^{m-1}$ and consider the pushforward ${\pi_X}_{\#}\mu$ on $\partial \mathbf{B}_2^\prime$. Then for a measurable set $\mathcal{A} \subset \partial \mathbf{B}_2^\prime$, we have
    \begin{align*}
        {\pi_X}_{\#}\mu(\mathcal{A})= \mu(\pi_X^{-1}(\mathcal{A}))= \int_{\mathcal{A}} \frac{\cos{\lambda_X}}{\omega(r_X)}dS_2^\prime,
    \end{align*}
    where $S_2^\prime$ is the induced measure on $\partial \mathbf{B}_2^\prime$ from the metric of $M$. Equivalently, 
    \begin{align*}
        dS_2^\prime = \frac{\omega(r_X)}{\cos{\lambda_X}}d{\pi_X}_{\#}\mu.
    \end{align*}
\end{lemma}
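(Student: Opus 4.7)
My plan is to interpret $\partial \mathbf{B}_2'$ as a radial graph over $\mathbb{S}^{m-1}$ centered at $X$ and then extract the Jacobian of $\pi_X^{-1}$ from a first variation of volume argument. Convexity of $\mathbf{B}_2'$ (already used in the proof of Lemma \ref{symmetry}) guarantees that each geodesic ray from $X$ hits $\partial \mathbf{B}_2'$ exactly once, so $\pi_X\colon \mathbb{S}^{m-1}\to \partial \mathbf{B}_2'$ is a diffeomorphism. Since $R_2 < \mathrm{inj}(M)/2$, the closure $\mathrm{cl}(\mathbf{B}_2')$ lies in the normal neighborhood of $X$, so polar coordinates $(r,v)\in(0,\infty)\times \mathbb{S}^{m-1}$ centered at $X$ are valid there, with volume form $\omega(r)\,dr\,d\mu(v)$. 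Write $R(v):=r_X(\pi_X(v))$, so that $\partial \mathbf{B}_2'$ is the radial graph $\{\exp_X(R(v)\,v):v\in \mathbb{S}^{m-1}\}$.

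Given a test function $\phi\in C^\infty(\mathbb{S}^{m-1})$, I consider the one-parameter family of perturbed domains $\mathbf{B}_2'(\epsilon)$ bounded by $\{\exp_X((R(v)+\epsilon\phi(v))\,v)\}$. In polar coordinates,
\[
\mathrm{Vol}(\mathbf{B}_2'(\epsilon)) = \int_{\mathbb{S}^{m-1}}\int_0^{R(v)+\epsilon\phi(v)}\omega(r)\,dr\,d\mu(v),
\]
and differentiation at $\epsilon=0$ yields $\int_{\mathbb{S}^{m-1}}\omega(R(v))\,\phi(v)\,d\mu(v)$. By the standard first variation of volume formula, the same derivative equals $\int_{\partial \mathbf{B}_2'}\langle Y,\eta\rangle\,dS_2'$, where $Y$ is the initial velocity of the deformation. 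By the Gauss lemma, $Y(P)$ equals $\phi(\pi_X^{-1}(P))$ times the outward unit radial direction from $X$ at $P$.

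The key geometric step is to identify $\langle Y,\eta\rangle$. The outward unit normal $\eta$ at $P\in\partial \mathbf{B}_2'$ is the outward unit radial direction from $C'$ at $P$, namely $-\vec{v}_P(C')$. The outward unit radial direction from $X$ at $P$ is $-\vec{v}_P(X)$. The Riemannian angle between these two directions is precisely the angle $\lambda(P)$ of the triangle $(PXC')$ at $P$, i.e.\ $\lambda_X(P)$. Hence
\[
\langle Y,\eta\rangle = (\phi\circ\pi_X^{-1})\,\cos\lambda_X.
\]

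Equating the two expressions for the volume derivative and using the arbitrariness of $\phi$ yields the measure identity $\omega(r_X)\,d(\pi_X)_{\#}\mu = \cos\lambda_X\,dS_2'$ on $\partial \mathbf{B}_2'$. Since $0<r_X<2R_2<\mathrm{inj}(M)$ on $\mathbf{B}_2'$, one has $\omega(r_X)>0$, so dividing produces the claim. The main obstacle is the clean geometric bookkeeping of the angle in the previous paragraph (and the global invertibility of $\pi_X$, which convexity supplies); consistently, Lemma \ref{symmetry}\ref{a} yields $\cos\lambda_X>0$, matching the local invertibility of $\pi_X$.
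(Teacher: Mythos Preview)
Your argument is correct and complete; the key identifications (the polar volume element $\omega(r)\,dr\,d\mu$, the variation field $Y$ via the Gauss lemma, and the angle $\lambda_X(P)$ between the two radial directions at $P$) are all sound, and the arbitrariness of $\phi$ together with the diffeomorphism property of $\pi_X$ lets you read off the density.

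However, your route differs from the paper's. The paper does \emph{not} use a first variation of volume. Instead it introduces the vector field $\mathbb{F}=\frac{1}{\omega(r_X)}\,\partial_r$, observes that it is divergence-free (equivalently, that $a\circ r_X$ is harmonic), and applies the divergence theorem on the solid cone from $X$ over a small geodesic ball $\mathcal{B}\subset\partial\mathbf{B}_2'$, truncated by $\partial\mathbf{B}_1$. The flux of $\mathbb{F}$ through the outer cap $\mathcal{B}$ is $\int_{\mathcal{B}}\frac{\cos\lambda_X}{\omega(r_X)}\,dS_2'$, while the flux through the inner cap on $\partial\mathbf{B}_1$ equals $\mu(\pi_X^{-1}(\mathcal{B}))$; equality of the two fluxes gives the formula on balls, and a density argument extends it to measurable sets. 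The paper's choice has the virtue of tying the lemma directly to the harmonic test function $a\circ r_X$ used throughout (as noted in the Remark following the lemma), and it also underlies the Newton shell theorem corollary. Your variational argument is arguably more elementary---it avoids constructing a special divergence-free field and the Radon--Nikodym setup---and makes transparent that the result is purely a Jacobian computation for the radial-graph parametrization.
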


\begin{proof}
    It is clear that $S_2^\prime$ and ${\pi_X}_{\#}\mu$ are $\sigma$-finite and  ${\pi_X}_{\#}\mu \ll S_2^\prime$ that is to say $(\pi_X)_{\#}\mu$ is absolutely continuous with respect to $S_2^\prime$. Furthermore, $S_2^\prime \ll {\pi_X}_{\#}\mu$. By the Radon-Nikodym theorem, there are functions $f_1$ and $f_2$ on $\partial \mathbf{B}_2^\prime$ such that 
    \begin{align*}
    {\pi_X}_{\#} \mu(\mathcal{A})=\int_{\mathcal{A}}f_1 dS_2^\prime
    \end{align*}
    and 
    \begin{align*}
    dS_2^\prime(\mathcal{A})=\int_{\mathcal{A}}f_2 d{\pi_X}_{\#}\mu.
    \end{align*}
  Consider a vector field $\mathbb{F}$ on $M\setminus \{X\}$ defined by
     \begin{align*}
        \mathbb{F}(Y) = \left(\frac{1}{\omega(r_X)}\frac{\partial}{\partial r}\right)(Y),
    \end{align*}
    where $\frac{\partial}{\partial r}(Y)$ is the vector in $T_YM$ obtained by the parallel transport of the unit tangent vector $\vec{v}_X(Y)$ along $XY$.
    Then 
    \begin{align}\label{div}
        \text{div}(\mathbb{F})= \frac{1}{\omega(r_X)}\frac{\partial}{\partial r}\left(\omega(r_X)\cdot \frac{1}{\omega(r_X)} \right)=0.
    \end{align}
 Let $\mathcal{B}$ be a geodesic ball in $\partial \mathbf{B}_2'$ with respect to the induced metric on $\partial \mathbf{B}_2'$. We now consider a region $\mathcal{R}$ that is the region in $\text{cl}(\mathbf{B}_2')\setminus \mathbf{B}_1$ of the solid cone from $X$ over $\mathcal{B}$. Equivalently, 
    \begin{align*}
        \mathcal{R} = \{\text{exp}_X (t\cdot \vec{v}_X(Y)) | Y\in \mathcal{B}, R_1 \le t \le r_X(Y)\}.
    \end{align*}
   Let $\mathcal{B}_1=\mathcal{R}\cap \partial \mathbf{B}_1 $. Then applying the divergence theorem to $\mathbb{F}$ on $\mathcal{R}$, we have 
   \begin{align*}
       0=\int_{\mathcal{R}}\text{div}\mathbb{F} = \int_{\mathcal{B}} \frac{\cos{\lambda_X}}{\omega(r_X)}dS_2^\prime-\int_{\mathcal{B}_1} \frac{1}{\omega(R_1)}dS_1,
   \end{align*}
   where $S_1$ is the measure on $\partial \mathbf{B}_1$ induced by the metric of $M$. 
   Combining it with the fact that 
   \begin{align*}
   \int_{\mathcal{B}_1} \frac{1}{\omega(R_1)} dS_1= \mu(\pi_X^{-1}(\mathcal{A})),
   \end{align*}
   the first statement is proved for $\mathcal{B}$. Then by Theorem 4.7 in \cite{Simon1983measure}, the first statement is proved. Since $\cos{\lambda_X}\neq 0$ from Lemma \ref{symmetry}, the second argument follows.  \qed
\end{proof}

\begin{remark}
If we extend the domain of $a(r)$ to $(0,\infty)$ in Lemma \ref{a(r)}, the vector field $\mathbb{F}$  in the proof is, in fact, $\mathbb{F}(Y)= \nabla(a\circ r_X)(Y)$. Note that $a\circ r_X$ is harmonic in $M\setminus \{X\}$. Thus, (\ref{div}) is obtained without computation.

\end{remark} 

The following corollary is not necessary for the proof of the main theorem. 
\begin{corollary} \label{newton} We have
\begin{align*}
    \int_{\partial \mathbf{B}_2^\prime} \frac{\vec{v}_X}{\omega(r_X)}dS_2^\prime =0.
\end{align*}    
\end{corollary}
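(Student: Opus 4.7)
The corollary is a manifestation of Newton's shell theorem in two-point homogeneous spaces: the radial vector field $\vec{v}_X/\omega(r_X)$ on $\partial \mathbf{B}_2'$, seen from an interior observer $X$, should exert zero net ``pull.'' My plan is to push the integral down to the unit sphere in $T_XM$ via Lemma \ref{radon}, then annihilate it using the antipodal symmetry of $\lambda_X$ supplied by Lemma \ref{symmetry}(c).

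In the first step I would apply the density identity $dS_2' = \frac{\omega(r_X)}{\cos \lambda_X}\, d(\pi_X)_{\#}\mu$ of Lemma \ref{radon}, which is legitimate because Lemma \ref{symmetry}(a) guarantees $\cos \lambda_X > 0$ on $\partial \mathbf{B}_2'$. Since the integrand takes values in the fixed tangent space $T_XM$, change of variables through $\pi_X$ gives
\[
\int_{\partial \mathbf{B}_2'} \frac{\vec{v}_X}{\omega(r_X)}\, dS_2' = \int_{\mathbb{S}^{m-1}} \frac{\vec{v}_X(\pi_X(p_s))}{\cos \lambda_X(\pi_X(p_s))}\, d\mu(p_s) = \int_{\mathbb{S}^{m-1}} \frac{p_s}{\cos \lambda_X(\pi_X(p_s))}\, d\mu(p_s),
\]
where the last equality is the tautology $\vec{v}_X(\pi_X(p_s)) = p_s$.

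The second step kills the remaining integral by antipodal symmetry. If $X = C'$ the integrand is rotationally invariant and the vanishing is immediate, so assume $X \neq C'$ and set $\mathbb{S}^+ = \{p_s : \angle(p_s, c_s') \le \pi/2\}$ and $\mathbb{S}^- = \{p_s : \angle(p_s, c_s') \ge \pi/2\}$. The antipodal involution $\iota : p_s \mapsto -p_s$ is a measure-preserving bijection between $\mathbb{S}^+$ and $\mathbb{S}^-$, since (\ref{180}) yields $\angle(-p_s, c_s') = \pi - \angle(p_s, c_s')$. For each $p_s \in \mathbb{S}^+$ the hypothesis of Lemma \ref{symmetry}(c) is satisfied, so $\lambda_X(\pi_X(p_s)) = \lambda_X(\pi_X(-p_s))$. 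Substituting $p_s \to -p_s$ in the integral over $\mathbb{S}^-$ leaves the denominator invariant while flipping the sign of the numerator, and the two hemispheric contributions cancel.

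The only mild obstacle is the conditional nature of Lemma \ref{symmetry}(c), which supplies $\lambda_X(P) = \lambda_X(-P)$ only on the closed hemisphere centered at $c_s'$; splitting $\mathbb{S}^{m-1}$ exactly at the equator $\{\angle(p_s, c_s') = \pi/2\}$ and transferring $\mathbb{S}^-$ to $\mathbb{S}^+$ via $\iota$ is precisely what makes that hypothesis available on the side where the lemma is invoked.
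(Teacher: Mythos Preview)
Your proof is correct and follows essentially the same route as the paper: pull the integral back to $\mathbb{S}^{m-1}$ via Lemma~\ref{radon}, then cancel antipodal contributions using $\lambda_X(P)=\lambda_X(-P)$ from Lemma~\ref{symmetry}(c) together with $\vec{v}_X(\pi_X(-p_s))=-p_s$. Your explicit hemispheric splitting to respect the hypothesis of Lemma~\ref{symmetry}(c) is more careful than necessary here---since the angular identity is symmetric in $P$ and $-P$, it automatically holds on all of $\mathbb{S}^{m-1}$---but it does no harm.
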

\begin{proof}
    Using the previous lemma, the left hand side is equal to 
    \begin{align} \label{symmetric identity}
        \int_{\mathbb{S}^{m-1}} \left(\frac{\vec{v}_X}{\cos{\lambda_X}}\circ \pi_X\right) d\mu.
    \end{align}
    By Lemma \ref{symmetry}, we have 
    \begin{align} \label{identityinNewtonshelltheorem}
        \left(\frac{\vec{v}_X}{\cos{\lambda_X}} \right) \circ \pi_X(p_s) + \left(\frac{\vec{v}_X}{\cos{\lambda_X}} \right) \circ \pi_X(-p_s) =0
    \end{align} for $p_s \in \mathbb{S}^{m-1}$ (see Fig. \ref{integration at X inNewtonshelltheorem}). Then this relation gives the desired result. 
    
    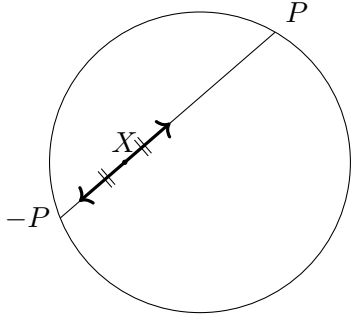
\begin{figure}
   
    \begin{center}
     \hspace{-1.8cm}
         \begin{tikzpicture}
    \coordinate (C) at (0,0);
        \coordinate (D) at (2,0);
        \coordinate (X) at (-1,0);
    \fill {(X) circle (1pt) node [above]{$X$}};
        \coordinate  [label=above right : $P$] (P) at ($(C)!1!60:(D)$);
        \path[name path=line1] let \p1=($(P)-(X)$), \n1={atan2(\y1,\x1)} in (X)--($(X)+(180+\n1 : 2*2cm)$);
        \path[name path=line0] (X)--(P);
    \draw[name path = circle2](C) circle(2cm);
    \draw[name intersections={of=line1 and circle2,by={-P}}] (-P) node[ left]{$-P$}--(X);
    \draw (X)--(P);
    \coordinate (P_s) at ($(X)!0.8cm!(P)$);
    \draw [->, very thick] (X) -- (P_s);
    \coordinate (-P_s) at ($(X)!0.8cm!(-P)$);
    \draw [->, very thick] (X) -- (-P_s);
    \tkzMarkSegment[pos=.4,mark=||](X,P_s);
    \tkzMarkSegment[pos=.4,mark=||](X,-P_s);
    \end{tikzpicture}
    \end{center}
        \caption{Pictorial explanation of calculation of (\ref{identityinNewtonshelltheorem}). Each thick arrows represents integrand of (\ref{identityinNewtonshelltheorem}) at $p_s$ and $-p_s$}.
        \label{integration at X inNewtonshelltheorem}
    \end{figure}
    \qed
\end{proof}

\begin{remark}
    Note that if $M=\mathbb{R}^{3}$, then $\omega(r)=r^2$. Furthermore $\vec{v}_X(\pi_X(p_s))$ is the unit vector from $X$ to $P=\pi_X(p_s)$ at $X$. Thus the equation becomes Newton's shell theorem, which implies that the net gravitational force of a spherical shell acting on any object inside is zero. 
\end{remark}

\subsubsection{The proof for nCROSS} \label{section3.2.2}
In this section, we prove the main theorem for nCROSS. We use the fact that the first mixed Steklov-Dirichlet eigenfunction, $a\circ r_C$, of the annulus $\mathbf{B}_2\setminus \mathbf{B}_1$ is a test function in both of the variational characterizations of $\sigma_1(\mathbf{B}_2^\prime \setminus \mathbf{B}_1)$ and $\sigma_1(\mathbf{B}_2\setminus \mathbf{B}_1)$. Substituting the test function into the two Rayleigh quotients, we compare the two denominators and the two numerators in the following two propositions.\\
Define a map
\begin{align*}
    \int_{\partial\mathbf{B}_2^\prime}(a\circ r_{(\cdot)})^2 dS_2^\prime : \mathbf{B}_2^\prime &\rightarrow \mathbb{R}
\end{align*} 
that assigns to $X \in \mathbf{B}_2^\prime$
\begin{align*}
    \int_{\partial\mathbf{B}_2^\prime}(a\circ r_{X})^2 dS_2^\prime
\end{align*}
In the following proposition, we show that the function has a minimum value at $C$ by analyzing the gradient of the function at each $X\in \mathbf{B}_2^\prime$,
\begin{align*}
     \nabla \left(\int_{\partial \mathbf{B}_2^\prime} (a\circ r_{(\cdot)})^2 dS_2^\prime \right)(X)\in T_XM.
\end{align*}
\begin{proposition}\label{gradientofL2sum}
We have
\begin{align} \label{gradientofthefunction}
    \nabla \left(\int_{\partial \mathbf{B}_2^\prime} (a\circ r_{(\cdot)})^2 dS_2^\prime \right)(X) =
    \begin{cases}
    -g(X)\cdot \vec{v}_X(C') &\text{if}\enspace X\neq C',\\
    0 &\text{if}\enspace X=C',
    \end{cases}
\end{align}
where $g: \mathbf{B}_2^\prime \setminus\{C'\} \rightarrow \mathbb{R}^+$ is a positive function. Furthermore, 
\begin{align}\label{inequalityfromgradient} 
\int_{\partial \mathbf{B}_2^\prime} (a\circ r_{C'})^2 dS_2^\prime  \le \int_{\partial \mathbf{B}_2^\prime} (a\circ r_X)^2 dS_2^\prime,
\end{align}
and equality holds if and only if $X=C'$.
\end{proposition}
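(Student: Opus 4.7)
The plan is to compute $\nabla F$ for $F(X) := \int_{\partial \mathbf{B}_2'} (a \circ r_X)^2 \, dS_2'$, show it has the asserted form, and then deduce (\ref{inequalityfromgradient}) by integrating along the geodesic from $X$ to $C'$. A direct first variation, using $\frac{d}{dt}\big|_{t=0} r_{X(t)}(P) = -\langle X'(0), \vec{v}_X(P)\rangle$ together with $a'(r) = 1/\omega(r)$, gives
\begin{align*}
    \nabla F(X) = -2\int_{\partial \mathbf{B}_2'} \frac{a(r_X)}{\omega(r_X)}\, \vec{v}_X \, dS_2'.
\end{align*}
At $X = C'$, the scalar factor is constant (since $r_{C'} \equiv R_2$), so Corollary \ref{newton} immediately yields $\nabla F(C') = 0$.

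For $X \ne C'$, I would apply Lemma \ref{radon} to pull the integral back to $\mathbb{S}^{m-1} \subset T_X M$; since $\vec{v}_X \circ \pi_X(p_s) = p_s$, the $\omega$ factors cancel and
\begin{align*}
    \nabla F(X) = -2\int_{\mathbb{S}^{m-1}} \frac{(a \circ r_X \circ \pi_X)(p_s)}{(\cos \lambda_X \circ \pi_X)(p_s)}\, p_s \, d\mu(p_s).
\end{align*}
To show that this vector lies along $\vec{v}_X(C') = c_s'$, consider the reflection $\sigma : p_s \mapsto \bar{p_s} = 2\langle p_s, c_s'\rangle c_s' - p_s$ about $\mathbb{R} c_s'$: this is a measure-preserving involution of $\mathbb{S}^{m-1}$, and Lemma \ref{symmetry} (b) shows that the scalar integrand is $\sigma$-invariant. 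Symmetrizing the vector integrand over each orbit $\{p_s, \sigma(p_s)\}$ replaces $p_s$ by $\tfrac{1}{2}(p_s + \bar{p_s}) = \langle p_s, c_s'\rangle c_s'$, yielding
\begin{align*}
    \nabla F(X) = -2 \left(\int_{\mathbb{S}^{m-1}} \frac{(a \circ r_X \circ \pi_X)(p_s)}{(\cos \lambda_X \circ \pi_X)(p_s)}\, \langle p_s, c_s'\rangle\, d\mu(p_s)\right) \vec{v}_X(C').
\end{align*}

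To verify that the scalar above is strictly positive, I split $\mathbb{S}^{m-1} = H^+ \cup H^-$ with $H^\pm := \{p_s : \pm \langle p_s, c_s'\rangle \ge 0\}$ and pair $p_s \in H^+$ with $-p_s \in H^-$. Lemma \ref{symmetry} (c) gives $\lambda_X(\pi_X(-p_s)) = \lambda_X(\pi_X(p_s))$ and $r_X(\pi_X(p_s)) \ge r_X(\pi_X(-p_s))$, with strict inequality whenever $\langle p_s, c_s'\rangle > 0$; combined with the strict monotonicity $a'(r) = 1/\omega(r) > 0$, each paired contribution
\begin{align*}
    \frac{a(r_X(\pi_X(p_s))) - a(r_X(\pi_X(-p_s)))}{\cos \lambda_X(\pi_X(p_s))}\, \langle p_s, c_s'\rangle
\end{align*}
is nonnegative and strictly positive on the open half of $H^+$, so the scalar is strictly positive; this defines $g(X)$ and proves (\ref{gradientofthefunction}). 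Finally, parametrizing the minimizing geodesic $\gamma : [0, d(X, C')] \to M$ from $X$ to $C'$ by arclength, one has $\gamma'(t) = \vec{v}_{\gamma(t)}(C')$ for each $t < d(X, C')$, so $\frac{d}{dt} F(\gamma(t)) = \langle \nabla F(\gamma(t)), \gamma'(t)\rangle = -g(\gamma(t)) < 0$, giving strict monotonicity along $\gamma$ and hence (\ref{inequalityfromgradient}) with equality iff $X = C'$. The most delicate step is the direction identification: two-point homogeneous spaces do not in general admit a continuous rotational symmetry about a generic geodesic axis $\overline{XC'}$, so reducing the vector integral over $\mathbb{S}^{m-1}$ to $\mathbb{R} c_s'$ cannot be done by group-averaging and instead relies on the discrete reflection symmetry encoded in Lemma \ref{symmetry} (b).
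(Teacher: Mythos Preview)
Your proof is correct and follows essentially the same approach as the paper: differentiate under the integral, pull back to $\mathbb{S}^{m-1}$ via Lemma~\ref{radon}, and use the discrete symmetries of Lemma~\ref{symmetry} to reduce the vector to the $\vec{v}_X(C')$-direction with a strictly positive coefficient. The only cosmetic difference is that the paper handles the four points $p_s,\bar p_s,-p_s,-\bar p_s$ in one combined step, whereas you apply the reflection $\sigma$ and the antipodal pairing separately; your explicit invocation of Corollary~\ref{newton} for the case $X=C'$ and your final integration along the geodesic are also slightly more explicit than the paper's presentation, but the substance is identical.
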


\begin{proof}The gradient is calculated at $X\in \mathbf{B}_2^\prime$, so it does not affect on the integration region $\partial \mathbf{B}_2^\prime$. Then for $P\in \partial \mathbf{B}_2^\prime$, $\nabla (a\circ r_{(\cdot)}(P))^2(X) \in T_X M$. Thus
\[ -\nabla \left( \int_{\partial \mathbf{B}_2^\prime} (a\circ r_{(\cdot)})^2(P) dS_2^\prime(P)\right)(X)
=\int_{\partial \mathbf{B}_2^\prime}\frac{2(a\circ r_X)}{\omega(r_X)}\cdot(-\nabla r_{(\cdot)}(P)(X))dS_2^\prime(P).
\]
With $-\nabla (r_{(\cdot)}(P))(X)=\vec{v}_X(P)$ and Lemma \ref{radon}, the previous equation is equal to
\begin{align} \label{symmetric identity2}
 \int_{\mathbb{S}^{m-1}} \left( 2(a\circ r_X) \cdot \frac{\vec{v}_X}{\cos{\lambda_X}} \right) \circ \pi_X d\mu.
\end{align}

\begin{figure}
	\centering
    \hspace{-1.5cm}
    \begin{tikzpicture}
        \coordinate (C) at (0,0);
        \coordinate (D) at (2,0);
        \coordinate (X) at (-1,0);
        \fill {(X) circle (1pt)};
        \node[left] at (-1.2,0) {$X$};
        \coordinate  [label=above right : $P$] (P) at ($(C)!1!60:(D)$);
        \coordinate  [label=below right : $\bar{P}$] (bP) at ($(C)!1!-60:(D)$);
        \path[name path=line1] let \p1=($(P)-(X)$), \n1={atan2(\y1,\x1)} in (X)--($(X)+(180+\n1 : 2*2cm)$);
        \path[name path=line2] let \p2=($(bP)-(X)$), \n2={atan2(\y2,\x2)} in (X)--($(X)+(180+\n2 : 2*2cm)$);
        \path[name path=line0] (X)--(P);
        \path[name path=line3] (X)--(bP);
        \draw[name path = circle2](C) circle(2cm);
        \draw[name intersections={of=line1 and circle2,by={-P}}] (-P) node[ left]{$-P$}--(X);
        \draw[name intersections={of=line2 and circle2,by={-bP}}] (-bP) node[ left]{$-\bar{P}$}--(X);
        \path[name path=line3] (X)--(bP);
        \path[name path=linec] (X)--(C);
        \draw (X)--(P);
        \draw (X)--(bP);
        \coordinate (P_s) at ($(X)!0.8cm!(P)$);
        \draw [->, very thick] (X) -- (P_s);
        \coordinate (bP_s) at ($(X)!0.8cm!(bP)$);
        \draw [->, very thick] (X) -- (bP_s);
        \tkzMarkSegment[pos=.4,mark=||](X,P_s);
        \tkzMarkSegment[pos=.4,mark=||](X,bP_s);
        \coordinate (-P_s) at ($(X)!0.4cm!(-P)$);
        \coordinate (-bP_s) at ($(X)!0.4cm!(-bP)$);
        \draw [->, very thick] (X) -- (-P_s);
        \draw [->, very thick] (X) -- (-bP_s);
        \tkzMarkSegment[pos=.4,mark=|](X,-P_s);
        \tkzMarkSegment[pos=.4,mark=|](X,-bP_s);
    \end{tikzpicture}
    \caption{Pictorial explanation of calculation of (\ref{symmetric identity2}). Each thick arrows represents integrand of (\ref{symmetric identity2}) at $p_s,\bar{p_s},-p_s$, and $-\bar{p_s}$.}
\label{integration at X}
\end{figure}
If $X=C'$, the integral has value 0. Otherwise, we consider the integrand at $p_s \in \{v|\langle v, c' \rangle \ge 0 \}\subset \mathbb{S}^{m-1}$, $\bar{p_s}, -p_s,$ and $-\bar{p_s}$ (see Fig.  \ref{integration at X}). Note that the condition for {$p_s$} is equivalent to $\angle(\vec{v}_X(P),\vec{v}_X(C'))\le \frac{\pi}{2}$. Then using Lemma \ref{symmetry},

\begin{align*}
     &\left(\left( 2(a\circ r_X) \cdot \frac{\vec{v}_X}{\cos{\lambda_X}} \right)(P)+\left( 2(a\circ r_X) \cdot \frac{\vec{v}_X}{\cos{\lambda_X}} \right)(\bar{P})\right) \\
     +&\left(\left( 2(a\circ r_X) \cdot \frac{\vec{v}_X}{\cos{\lambda_X}} \right)(-P)+\left( 2(a\circ r_X) \cdot \frac{\vec{v}_X}{\cos{\lambda_X}} \right)(-\bar{P})\right) \\
     =&2(a\circ r_X)(P)\cdot \frac{2\langle\vec{v}_X(P),\vec{v}_X(C')\rangle }{\cos{\lambda_X}}\cdot \vec{v}_X(C')\\
     +&2(a\circ r_X)(-P)\cdot \frac{2\langle\vec{v}_X(-P),\vec{v}_X(C')\rangle }{\cos{\lambda_X}}\cdot \vec{v}_X(C')\\
     =&4\left( (a\circ r_X)(P)-(a\circ r_X)(-P)\right)\cdot \frac{\langle\vec{v}_X(P),\vec{v}_X(C')\rangle }{\cos{\lambda_X}}\cdot \vec{v}_X(C').
\end{align*}
Furthermore, Lemma \ref{symmetry} implies $ (a\circ r_X)(P)-(a\circ r_X)(-P)>0$ unless $\angle(\vec{v}_X(P),\vec{v}_X(C')) =\pi/2$. Thus our integration has a form $g(X)\cdot \vec{v}_X(C')$ for some positive function $g$. Note that we actually proved that the gradient of the function has the opposite direction from $X$ to $C'$ (see Fig. \ref{pic:gradientofthefunction}). It implies our desired inequality (\ref{inequalityfromgradient}).

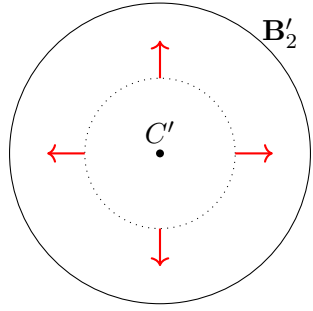
\begin{figure}
        \begin{center}
    \begin{tikzpicture}
    \coordinate [label=above : $C'$] (C) at (0,0);
\coordinate (D) at (2,0);
\coordinate (X) at (-1,0);
\coordinate (rX) at (1,0);
\coordinate (nX) at (0,1);
\coordinate (sX) at (0,-1);

\draw(C) circle (2cm);
\draw[dotted] (C) circle (1cm);
\fill [black] (C) circle (1.5pt);

\coordinate (P_s) at (-1.5,0);
\draw [->, thick, red] (X) -- (P_s);
\coordinate (rP_s) at (1.5,0);
\draw [->, thick, red] (rX) -- (rP_s);
\coordinate (nP_s) at (0,1.5);
\draw [->, thick, red] (nX) -- (nP_s);
\coordinate (sP_s) at (0,-1.5);
\draw [->, thick, red] (sX) -- (sP_s);

\node at (1.6,1.6){$\mathbf{B}_2^\prime$};    
    \end{tikzpicture}
    \end{center}
    \caption{Pictorial description of (\ref{gradientofthefunction}).}
    \label{pic:gradientofthefunction}
\end{figure} \qed
\end{proof}

\begin{remark}
    \begin{itemize}
        \item In the proof, the function $g$ only depends on the distance between $X$ and $C'$.
        \item The proof is similar to the proof of Corollary \ref{newton} if we compare (\ref{symmetric identity}) and (\ref{symmetric identity2}). The difference between the two proofs is the fact that $a$ is an increasing function.
    \end{itemize}
\end{remark}

\begin{corollary}\label{L2sum}We have
    \[\int_{\partial \mathbf{B}_2} (a\circ r_C)^2 dS_2 \le \int_{\partial \mathbf{B}_2^\prime} (a\circ r_C)^2 dS_2^\prime,
    \]
    where $S_2$ is the measure on $\partial \mathbf{B}_2$ induced from the metric of $M$. The equality holds if and only if $\mathbf{B}_2^\prime = \mathbf{B}_2$.
\end{corollary}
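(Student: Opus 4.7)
I would reduce Corollary \ref{L2sum} to Proposition \ref{gradientofL2sum} by transporting one configuration onto the other via an isometry. The proof of Proposition \ref{gradientofL2sum} depends on the outer ball only through the fact that it is a geodesic ball of radius $R_2$ (centered at some point of $M$), and on the monotonicity of $a$; nothing in that argument is specific to $\mathbf{B}_2'$ as opposed to $\mathbf{B}_2$. Hence the same conclusion applies verbatim with $\mathbf{B}_2$ in place of $\mathbf{B}_2'$ and $C$ in place of $C'$: for every $X\in\mathbf{B}_2$,
\[\int_{\partial\mathbf{B}_2}(a\circ r_C)^2\,dS_2 \;\le\; \int_{\partial\mathbf{B}_2}(a\circ r_X)^2\,dS_2,\]
with equality if and only if $X=C$.

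Next, since $M$ is two-point homogeneous, in particular homogeneous, I can pick an isometry $\Phi$ of $M$ with $\Phi(C')=C$. Because $\Phi$ sends geodesic balls to geodesic balls of the same radius, $\Phi(\mathbf{B}_2')=\mathbf{B}_2$, and $\Phi$ preserves both distances and the induced surface measures. Using $r_C(P)=d(C,P)=d(\Phi(C),\Phi(P))=r_{\Phi(C)}(\Phi(P))$, the change of variables $P\mapsto\Phi(P)$ gives
\[\int_{\partial\mathbf{B}_2'}(a\circ r_C)^2\,dS_2' \;=\; \int_{\partial\mathbf{B}_2}(a\circ r_{\Phi(C)})^2\,dS_2.\]
Since $C\in\mathbf{B}_2'$ we have $\Phi(C)\in\mathbf{B}_2$, so the inequality of the first paragraph applied with $X=\Phi(C)$ yields
\[\int_{\partial\mathbf{B}_2}(a\circ r_C)^2\,dS_2 \;\le\; \int_{\partial\mathbf{B}_2}(a\circ r_{\Phi(C)})^2\,dS_2 \;=\; \int_{\partial\mathbf{B}_2'}(a\circ r_C)^2\,dS_2',\]
which is exactly the desired inequality.

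For the equality case, equality in the displayed chain forces $\Phi(C)=C$ by Proposition \ref{gradientofL2sum}; combined with $\Phi(C')=C$ and injectivity of $\Phi$, this gives $C=C'$, hence $\mathbf{B}_2'=\mathbf{B}_2$. The converse is trivial. There is no genuine obstacle in this argument; the only point worth highlighting is the reuse of Proposition \ref{gradientofL2sum} with $\mathbf{B}_2$ in place of $\mathbf{B}_2'$, which is legitimate precisely because two-point homogeneity of $M$ is built into the standing hypothesis and guarantees that the proposition's proof transfers verbatim.
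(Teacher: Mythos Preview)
Your argument is correct and follows essentially the same approach as the paper: both reduce the corollary to Proposition~\ref{gradientofL2sum} via the homogeneity of $M$. The only difference is in where the isometry is applied. The paper applies Proposition~\ref{gradientofL2sum} directly on $\mathbf{B}_2'$ with $X=C$, after noting the simpler identity $\int_{\partial\mathbf{B}_2}(a\circ r_C)^2\,dS_2=\int_{\partial\mathbf{B}_2'}(a\circ r_{C'})^2\,dS_2'$ (both sides equal $(a(R_2))^2$ times the area of a geodesic sphere of radius $R_2$). You instead transport $\mathbf{B}_2'$ onto $\mathbf{B}_2$ by an explicit isometry and invoke the proposition on $\mathbf{B}_2$ with $X=\Phi(C)$; this is the same idea carried out in the mirror configuration, and is perfectly valid.
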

\begin{proof}
    Note that $\mathbf{B}_2$ is a ball of radius $R_2$, centered at $C$. Therefore we have 
    \[\int_{\partial \mathbf{B}_2} (a\circ r_C)^2 dS_2 = \int_{\partial \mathbf{B}_2^\prime} (a\circ r_{C'})^2 dS_2^\prime.
    \]
    Then Proposition \ref{gradientofL2sum} implies the statement. \qed
\end{proof}
In the following proposition, $(\nabla (a\circ r_X))(Z)$ for $Z\in M \setminus \{X\}$ is the gradient of 
\begin{align*}
    a\circ r_X(\cdot) : M\setminus\{X\} \rightarrow \mathbb{R}
\end{align*} at $Z$. 

\begin{proposition}\label{gradient} We have
\[ \int_{\mathbf{B}_2^\prime \setminus \textnormal{cl}(\mathbf{B}_1)}|\nabla(a\circ r_C)|^2dV \le \int_{\mathbf{B}_2 \setminus \textnormal{cl}(\mathbf{B}_1)}|\nabla(a\circ r_C)|^2 dV,
\] where $V$ is the induced measure of $M$. The equality holds if and only if $\mathbf{B}_2^\prime = \mathbf{B}_2$.
\end{proposition}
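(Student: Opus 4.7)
The plan is to pass to geodesic polar coordinates centered at $C$, reduce both sides to integrals over the unit sphere $\mathbb{S}^{m-1}\subset T_CM$ of the function $a$ evaluated at the ``exit radius'' $\rho(v)$ (the distance from $C$ to $\partial\mathbf{B}_2'$ along the geodesic in direction $v$), and then close the argument with Jensen's inequality. The key is the interplay with the radial volume $W(r):=\int_0^r\omega(t)\,dt$: the equality $|\mathbf{B}_2|=|\mathbf{B}_2'|$ (both are geodesic balls of radius $R_2$) translates into a constraint on the mean of $W(\rho(\cdot))$, while the integrals we want to compare become integrals of $a(\rho(\cdot))$; crucially, $a$ is concave as a function of $W$.

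First, since $a'(r)=1/\omega(r)$ and $|\nabla r_C|=1$ off the cut locus of $C$, we have $|\nabla(a\circ r_C)|^2=1/\omega(r_C)^2$. Because $R_2<\textnormal{inj}(M)/2$, the geodesic ball $\mathbf{B}_2'$ lies within a normal neighborhood of $C$ and is geodesically convex, hence star-shaped with respect to $C\in \textnormal{cl}(\mathbf{B}_1)\subset \mathbf{B}_2'$; this makes $\rho:\mathbb{S}^{m-1}\to\mathbb{R}$ smooth and single-valued, with $R_1<\rho(v)<2R_2$. With $dV=\omega(r)\,dr\,d\mu(v)$ on the polar chart,
\begin{align*}
\int_{\mathbf{B}_2'\setminus\textnormal{cl}(\mathbf{B}_1)}|\nabla(a\circ r_C)|^2\,dV &= \int_{\mathbb{S}^{m-1}}\int_{R_1}^{\rho(v)}\frac{dr}{\omega(r)}\,d\mu(v) = \int_{\mathbb{S}^{m-1}}a(\rho(v))\,d\mu(v),\\
\int_{\mathbf{B}_2\setminus\textnormal{cl}(\mathbf{B}_1)}|\nabla(a\circ r_C)|^2\,dV &= a(R_2)\cdot\mu(\mathbb{S}^{m-1}),
\end{align*}
and in the same coordinates the volume equality reads $\int_{\mathbb{S}^{m-1}}W(\rho(v))\,d\mu(v)=W(R_2)\cdot\mu(\mathbb{S}^{m-1})$.

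Set $F:=a\circ W^{-1}$. A direct chain-rule computation yields $F'(s)=1/\omega(W^{-1}(s))^2$, and since in nCROSS the density $\omega(r)=\sinh^{m-1}(r)\cosh^{k-1}(r)$ is strictly increasing in $r$, $F'$ is strictly decreasing, so $F$ is strictly concave on the relevant range. Jensen's inequality then gives
\begin{align*}
\frac{1}{\mu(\mathbb{S}^{m-1})}\int_{\mathbb{S}^{m-1}}a(\rho(v))\,d\mu &= \frac{1}{\mu(\mathbb{S}^{m-1})}\int_{\mathbb{S}^{m-1}}F(W(\rho(v)))\,d\mu\\
&\le F\!\left(\frac{1}{\mu(\mathbb{S}^{m-1})}\int_{\mathbb{S}^{m-1}}W(\rho(v))\,d\mu\right)=F(W(R_2))=a(R_2),
\end{align*}
which is the desired inequality. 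Strict concavity forces the equality case to occur iff $W(\rho(v))$, hence $\rho(v)$, is constant $\mu$-a.e.; by continuity of $\rho$ and the volume constraint this gives $\rho\equiv R_2$, so $\mathbf{B}_2'$ is the geodesic ball of radius $R_2$ centered at $C$, i.e.\ $\mathbf{B}_2'=\mathbf{B}_2$.

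The main point requiring care is the geometric setup: one must verify that $\mathbf{B}_2'$ is radially parameterized from $C$ by a single-valued continuous function $\rho$ on $\mathbb{S}^{m-1}$, so that the polar chart straightens the integration. This rests on the convexity of geodesic balls of radius $<\textnormal{inj}(M)/2$ together with $C$ being an interior point of $\mathbf{B}_2'$. Once this is in hand the rest is a direct application of Jensen to the one-variable concave function $a\circ W^{-1}$, which is itself just the reformulation of the known monotonicity of $r\mapsto 1/\omega(r)^2$.
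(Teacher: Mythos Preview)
Your proof is correct but takes a genuinely different route from the paper. The paper argues directly on the symmetric difference: since $r_C\le R_2$ on $\mathbf{B}_2\setminus\mathbf{B}_2'$ and $r_C\ge R_2$ on $\mathbf{B}_2'\setminus\mathbf{B}_2$, and $r\mapsto 1/\omega(r)^2$ is decreasing in the nCROSS case, one bounds each piece by $1/\omega(R_2)^2$ times its volume and the two volumes cancel. This needs no convexity or star-shapedness of $\mathbf{B}_2'$ with respect to $C$, only the trivial containments. Your Jensen argument is equally valid and arguably more conceptual---it recasts the inequality as concavity of $a$ viewed as a function of enclosed volume $W$---but it requires the additional geometric input (convexity of $\mathbf{B}_2'$, $C$ interior) to make $\rho$ single-valued and set up the polar integration. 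Both proofs rest on the same analytic fact that $\omega$ is strictly increasing for nCROSS, and neither extends to CROSS with $R_2\ge \textnormal{inj}(M)/4$, where $\omega$ fails to be monotone; the paper handles that regime separately by a comparison with a round sphere of carefully chosen curvature.
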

\begin{proof}
    Note that $|\nabla(a\circ r_C(\cdot))|=|\nabla a|\circ r_C(\cdot)$ and it is easy to check that $|\nabla a|(r) = |a^\prime(r)| = \frac{1}{\omega(r)}$ is a decreasing function since we only consider when $M$ is nCROSS. Then
    \begin{align*} &\int_{\mathbf{B}_2 \setminus \textnormal{cl}(\mathbf{B}_1)} |\nabla (a\circ r_C)|^2 dV- \int_{\mathbf{B}_2^\prime \setminus \textnormal{cl}(\mathbf{B}_1)} |\nabla (a\circ r_C)|^2 dV \\
    =&\int_{\mathbf{B}_2 \setminus \mathbf{B}_2^\prime} |\nabla (a\circ r_C)|^2 dV - \int_{\mathbf{B}_2^\prime \setminus \mathbf{B}_2} |\nabla (a\circ r_C)|^2 dV \\
    \ge &\int_{\mathbf{B}_2 \setminus \mathbf{B}_2^\prime} | \nabla a(R_2)|^2 dV - \int_{\mathbf{B}_2^\prime \setminus \mathbf{B}_2} |\nabla a(R_2)|^2 dV =0.
    \end{align*}
    To satisfy the equality, $|\mathbf{B}_2^\prime \setminus \mathbf{B}_2|=|\mathbf{B}_2 \setminus \mathbf{B}_2^\prime|=0$, or $\mathbf{B}_2^\prime=\mathbf{B}_2$. \qed
\end{proof}
\begin{remark}
  We used only the fact that $\omega(r)$ is a concave function in $[0,2R_2)$. Thus the proof also applies when $M$ is CROSS and $R_2 < \frac{inj(M)}{4}$.  
\end{remark}
 Now we have the following proof of the main theorem when $M$ is a nCROSS.
\paragraph{Proof of Theorem \ref{main} for nCROSS}
    Note that $u\circ r_C =0$ on $\partial \mathbf{B}_1$. By the variational characterization of $\sigma_1(\mathbf{B}_2^\prime \setminus \textnormal{cl}(\mathbf{B}_1))$, 
    \[ \sigma_1(\mathbf{B}_2^\prime \setminus\textnormal{cl}(\mathbf{B}_1)) \le \frac{\int_{\mathbf{B}_2^\prime\setminus \textnormal{cl}(\mathbf{B}_1)} |\nabla (a\circ r_C)|^2 dV}{\int_{\partial \mathbf{B}_2^\prime}(a\circ r_C)^2 dS_2^\prime}.
    \]
    By Corollary \ref{L2sum} and Proposition \ref{gradient}, we have
    \[ \sigma_1(\mathbf{B}_2^\prime \setminus\textnormal{cl}(\mathbf{B}_1)) \le \frac{\int_{\mathbf{B}_2 \setminus \textnormal{cl}(\mathbf{B}_1)} |\nabla (a\circ r_C)|^2 dV}{\int_{\partial \mathbf{B}_2}(a\circ r_C)^2 dS_2}.
    \]
    Since we have shown that $a\circ r_C$ is the first mixed Steklov-Dirichlet eigenfunction on the annulus $\mathbf{B}_2 \setminus \textnormal{cl}(\mathbf{B}_1)$ in Proposition \ref{firsteigenfunction}, the right hand side is $\sigma_1(\mathbf{B}_2 \setminus \textnormal{cl}(\mathbf{B}_1))$. It is the desired inequality. In addition, the equality condition is followed from the equality conditions in  Corollary \ref{L2sum} and Proposition \ref{gradient}. \qed

\begin{remark}
    The method of the proof carries over to Euclidean space $\mathbb{R}^m$.
\end{remark}
\subsubsection{The proof for CROSS} \label{section3.2.3}
In this section we modify the proof of Proposition \ref{gradient} to show that the inequality in this proposition also holds when $M$ is CROSS and $R_2<\frac{inj(M)}{2}$. Then using the same argument in Section \ref{section3.2.2}, we can show that the main theorem holds in this situation.

$B_r(C)$ denotes the ball of radius $r$, centered at $C$ and $d:=r_C(C')$ denotes the distance between $C$ and $C'$. In addition, let $\Pr_C : M\setminus\{C\}\rightarrow \mathbb{S}^{m-1}\subset \mathbb{R}^m$ be the direction of a point in $M\setminus \{C\}$ with respect to $C$ in the coordinate of $M$ we defined in Definition 4 in Section \ref{section3.2}.  Then the difference between the two sides of the inequality in Proposition \ref{gradient} becomes
\begin{align*}
    &\int_{\mathbf{B}_2\setminus \text{cl}(\mathbf{B}_1)}\left(\frac{1}{\omega(r)}\right)^2dV-\int_{\mathbf{B}_2'\setminus \text{cl}(\mathbf{B}_1)}\left(\frac{1}{\omega(r)}\right)^2dV \\
    =&\int_{\mathbf{B}_2\setminus \mathbf{B}_2'}\left(\frac{1}{\omega(r)}\right)^2dV-\int_{\mathbf{B}_2'\setminus \mathbf{B}_2}\left(\frac{1}{\omega(r)}\right)^2dV \\
    =&\int_{R_2-d}^{R_2}\int_{\Pr_C((\mathbf{B}_2\setminus \mathbf{B}_2')\cap \partial B_{r_1}(C))}\frac{1}{\omega(r_1)}d\mu dr_1\\ 
    &-\int_{R_2}^{R_2+d}\int_{\Pr_C((\mathbf{B}_2'\setminus \mathbf{B}_2)\cap \partial B_{r_2}(X))}\frac{1}{\omega(r_2)}d\mu dr_2\\
    =&\int_0^d \left( \int_{\Pr_C((\mathbf{B}_2\setminus \mathbf{B}_2')\cap \partial B_{R_2-s}(C))}\frac{1}{\omega(R_2-s)} d\mu \right. \\
    &\left. -\int_{\Pr_C((\mathbf{B}_2'\setminus \mathbf{B}_2)\cap \partial B_{R_2+s}(C))}\frac{1}{\omega(R_2+s)} d\mu \right) ds.
\end{align*} 
The last equality is obtained by substituting $r_1$ and $r_2$ by $R_2-s$ and $R_2+s$ for $s<d$, respectively. See Fig. \ref{regions} for pictorial description of  $(\mathbf{B}_2\setminus \mathbf{B}_2')\cap \partial B_{R_2-s}(C)$ and $(\mathbf{B}_2'\setminus \mathbf{B}_2)\cap \partial B_{R_2+s}(C)$. Then the integral becomes nonnegative provided that the following two lemmas hold.

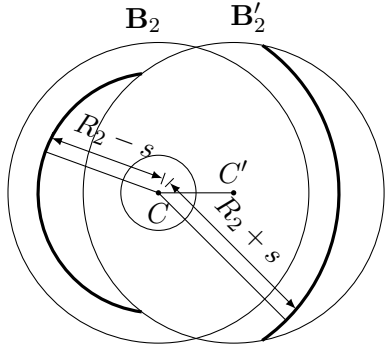
\begin{figure}
    \centering
    \begin{tikzpicture}
    \coordinate (C) at (0,0);
        \coordinate (D) at (2,0);
        \coordinate (X) at (-1,0);
        \fill {(X) circle (1pt) node[below]{$C$} };
        \fill {(C) circle (1pt) node[above]{$C'$}};
        \node[above] at (-1.2,2) {$\mathbf{B}_2$};
        \node[above] at (0.2,2) {$\mathbf{B}_2'$};
        \draw[name path = circle2](C) circle(2cm);
        \draw[name path = circle3](X) circle(2cm);
    \draw[name path =circle1](X) circle (0.5cm);
    \draw[style=transparent, name path=circle3'](X) circle (2.4cm);
    \draw[style=transparent, name path=circle3''](X) circle (1.6cm);
    \draw (X)--(C);
    
     \draw [name intersections={of=circle2 and circle3', by={first intersect, second intersect}}];
     \draw [name intersections={of=circle2 and circle3'', by={third intersect, fourth intersect}}];
     \draw[very thick] let \p1=($(first intersect)+(1,0)$), \n1={atan2(\y1,\x1)},\p2=($(second intersect)+(1,0)$), \n2={atan2(\y2,\x2)} in (X) + (\n2 : 2.4) arc (\n2 : \n1 :2.4);
     \draw[very thick] let \p3=($(third intersect)+(1,0)$), \n3={atan2(\y3,\x3)},\p4=($(fourth intersect)+(1,0)$), \n4={atan2(\y4,\x4)} in (X) + (\n4+180:-1.6) arc (\n4+180 : \n3-180 :-1.6);
    \draw (X)to [dim above = $R_2+s$] ($(X)!1!-45:(1.4,0)$);
    \draw (X)to [dim below = $R_2-s$] ($(X)!1!1600:(0.6,0)$);
    \end{tikzpicture}
    \caption{The left and right thick arcs represent $(\mathbf{B}_2\setminus \mathbf{B}_2')\cap \partial B_{R_2-s}(C)$ and  $(\mathbf{B}_2'\setminus \mathbf{B}_2)\cap \partial B_{R_2+s}(C)$, respectively. In addition, the inner circle is $\partial \mathbf{B}_1$ and we have $|CC'|=d$.}
\label{regions}
\end{figure}

\begin{lemma}\label{measure comparison}
We have
\begin{align*}
    |{\Pr}_{C}((\mathbf{B}_2'\setminus \mathbf{B}_2)\cap \partial B_{R_2+s}(C))|\le
    |{\Pr}_C((\mathbf{B}_2\setminus \mathbf{B}_2')\cap \partial B_{R_2-s}(C))|
\end{align*}
for $s<R_2$.
\end{lemma}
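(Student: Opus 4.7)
The plan is to prove the pointwise inclusion $-A\subseteq B$, where $A:=\Pr_C((\mathbf{B}_2'\setminus\mathbf{B}_2)\cap\partial B_{R_2+s}(C))$ and $B:=\Pr_C((\mathbf{B}_2\setminus\mathbf{B}_2')\cap\partial B_{R_2-s}(C))$. Since the antipodal map $v\mapsto-v$ is an isometry of $\mathbb{S}^{m-1}$ (in particular, preserves Lebesgue measure), this inclusion immediately gives $|A|=|-A|\leq|B|$, as desired.

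Setting $\gamma_v(t):=\exp_C(tv)$ and $f(t):=r_{C'}(\gamma_v(t))$, the required implication becomes $f(R_2+s)<R_2\Longrightarrow f(s-R_2)>R_2$. In the triangle $(\gamma_v(t)\,C\,C')$, the angular invariants at $C$ are $(\lambda_0,\varphi_0):=(\lambda(v,\vec v_C(C')),\varphi(v,\vec v_C(C')))$ when $t>0$, and by (\ref{180}) and (\ref{180_2}) they become $(\pi-\lambda_0,\varphi_0)$ when $t<0$. The key input is the law of cosines; for $\mathbb{K}=\mathbb{C},\mathbb{H},\mathbb{O}$ (cf.\ \cite{brehm1990shape}) it reads
\[
\cos(2\rho)=\cos(2p)\cos(2q)+\sin(2p)\sin(2q)\cos\lambda-2\sin^2 p\sin^2 q\sin^2\varphi,
\]
so that only the $\cos\lambda$ contribution flips sign under $\lambda\mapsto\pi-\lambda$.

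Evaluating at $(p,q,\lambda,\varphi)=(R_2+s,d,\lambda_0,\varphi_0)$ and $(R_2-s,d,\pi-\lambda_0,\varphi_0)$, summing, and simplifying via the product-to-sum identities together with $2\sin^2 s+\cos(2s)=1$, I expect the identity
\[
\cos(2f(R_2+s))+\cos(2f(s-R_2))=2\cos(2R_2)\cos(2\tilde\rho)-4\sin^2 R_2\sin^2 d\sin^2\varphi_0\leq 2\cos(2R_2),
\]
where $\tilde\rho$ is the third side of the auxiliary triangle with sides $s,d$ and angular invariants $\lambda_0,\varphi_0$ at their common vertex. For the real-type CROSSs $\mathbb{S}^m,\mathbb{R}P^n$, the analogous computation starting from $\cos\rho=\cos p\cos q+\sin p\sin q\cos\lambda$ gives the simpler bound $\cos f(R_2+s)+\cos f(s-R_2)=2\cos R_2\cos\tilde\rho\leq 2\cos R_2$.

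Since $R_2<\mathrm{inj}(M)/2$ forces $\cos(2R_2)>0$ in the complex-type case (resp.\ $\cos R_2>0$ in the real-type case), and $\cos$ is strictly decreasing on the relevant range, the hypothesis $v\in A$ yields $\cos(2f(R_2+s))>\cos(2R_2)$, and the displayed bound then forces $\cos(2f(s-R_2))<\cos(2R_2)$, so $f(s-R_2)>R_2$ and $-v\in B$. The principal obstacle is extracting the law of cosines in a clean form for $\mathbb{K}P^n$ with $\mathbb{K}\in\{\mathbb{C},\mathbb{H},\mathbb{O}\}$; after that, the argument is powered by the linear (hence sign-reversing) appearance of $\cos\lambda$ together with the even (hence sign-preserving) appearance of $\sin^2\varphi$.
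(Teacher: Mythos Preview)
Your argument is correct and yields a stronger conclusion than the paper's: you establish the pointwise inclusion $-A\subseteq B$ in $\mathbb{S}^{m-1}$, which of course implies the measure inequality. The trigonometric identity you anticipate does hold; for the $\mathbb{K}\ne\mathbb{R}$ case the law of cosines you quote is exactly the one that reduces to the spherical law (with doubled arguments) when $\varphi=0$ and to the real-projective law when $\varphi=\lambda$, and the computation producing $2\cos(2R_2)\cos(2\tilde\rho)-4\sin^2R_2\sin^2 d\sin^2\varphi_0$ goes through as you outlined. The hypothesis $R_2<\mathrm{inj}(M)/2$ enters precisely where you use it, to make $\cos(2R_2)$ (resp.\ $\cos R_2$) positive.

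The paper takes a quite different route. Instead of computing in $M$ directly, it picks the space form $\mathbb{S}^m_\kappa$ with $\kappa$ chosen so that $R_2=\pi/(2\sqrt{\kappa})$ is the hemisphere radius; since $\kappa$ then exceeds the maximal sectional curvature of $M$, the triangle comparison theorem bounds the angle $\angle SCC'$ at $C$ from above (for points of $\mathbf{B}_2\setminus\mathbf{B}_2'$) and from below (for points of $\mathbf{B}_2'\setminus\mathbf{B}_2$) by the corresponding angle in $\mathbb{S}^m_\kappa$. In the model space the two arcs are antipodal images of each other and $s_\kappa(R_2+s)=s_\kappa(R_2-s)$, so the measures match exactly there. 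Thus the paper reduces the inequality to an exact symmetry in a carefully chosen comparison sphere, using only the upper curvature bound and never the explicit $\mathbb{K}$-dependent law of cosines. Your approach is more hands-on and gives the sharper set-theoretic statement; the paper's is curvature-comparison based, uniform over all $\mathbb{K}$, and avoids having to check the trigonometric identity case by case.
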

\begin{proof}
    Consider $S \in (\mathbf{B}_2\setminus \mathbf{B}_2')\cap \partial B_{R_2-s}(C)$. Then the triangle $(SCC')$ has side lengths
    \begin{align*}
        |CC'|=d, |CS|=R_2-s, |C'S|\ge R_2.
    \end{align*}
    Consider the space form $\mathbb{S}_{\kappa}^m$ of constant curvature $\kappa$, where $\kappa\in \mathbb{R}^+$ is a constant such that a geodesic ball of radius $R_2$ is a hemisphere in $\mathbb{S}_{\kappa}^m$. Then we have
    \begin{align*}
        \frac{\pi}{2\sqrt{\kappa}}=R_2,
    \end{align*}
    so $\kappa$ is bigger than the sectional curvature of $M$. Now consider a triangle $(S_{\kappa}C_{\kappa}C'_{\kappa})$ with the same side lengths as $(SCC')$ in $\mathbb{S}_{\kappa}^m$. Then by the triangle comparison theorem (see \cite[p. 197]{karcher1989comparison}), 
    \begin{align*}
        \angle{SCC'} \le \angle{S_{\kappa}C_{\kappa}C'_{\kappa}}.
    \end{align*} 
    Then it implies the following inequality.
    \begin{align} \label{measure1}
        &|{\Pr}_{C}((\mathbf{B}_2\setminus \mathbf{B}_2')\cap \partial B_{R_2-s}(C))|\nonumber\\
        =&|\{{\Pr_C}(S)| |CS|=R_2-s, |C'S|\ge R_2 \}|\nonumber\\
        \ge& |\{S_{\kappa}| |C_{\kappa}S_{\kappa}|=R_2-s, |C'_{\kappa}S_{\kappa}|\ge R_2\}|\times \frac{1}{s_{\kappa}(R_2-s)}\nonumber\\
        =&|\{S_{\kappa}| S_{\kappa} \in ((\mathbf{B}_2)_{\kappa}\setminus (\mathbf{B}_2')_{\kappa})\cap \partial (B_{R_2-s})_{\kappa}(C_{\kappa})\}|\times \frac{1}{s_{\kappa}(R_2-s)},
    \end{align}
    where $(\mathbf{B}_2)_{\kappa}$ and $(\mathbf{B}_2')_{\kappa}$ are geodesic balls of radius $R_2$ in $\mathbb{S}_{\kappa}^m$, centered at $X_{\kappa}$ and $C'_{\kappa}$, respectively, and
    \begin{align*}
       s_{\kappa}(r)=\frac{\sin{\sqrt{\kappa}r}}{\sqrt{\kappa}}.  
    \end{align*}
   By a similar argument, we obtain
   \begin{align}\label{measure2}
       &|{\Pr}_C((\mathbf{B}_2'\setminus \mathbf{B}_2)\cap \partial B_{R_2+s}(C))|\nonumber\\
       \le& |\{S_{\kappa}'| S_{\kappa}' \in ((\mathbf{B}_2')_{\kappa}\setminus (\mathbf{B}_2)_{\kappa})\cap \partial (B_{R_2+s})_{\kappa}(C_{\kappa})\}|\times \frac{1}{s_{\kappa}(R_2+s)}
   \end{align}
    Since
    \begin{align*}
        s_{\kappa}(R_2-s)=s_{\kappa}(R_2+s),
    \end{align*}
    and the set 
    \begin{align*}
        \{S_{\kappa}| S_{\kappa} \in ((\mathbf{B}_2)_{\kappa}\setminus (\mathbf{B}_2')_{\kappa})\cap \partial (B_{R_2-s})_{\kappa}(C_{\kappa})\}
    \end{align*}
    is the image of the antipodal map in $\mathbb{S}_{\kappa}^m$ of
    \begin{align*}
        \{S_{\kappa}'| S_{\kappa}' \in ((\mathbf{B}_2')_{\kappa}\setminus (\mathbf{B}_2)_{\kappa})\cap \partial (B_{R_2+s})_{\kappa}(C_{\kappa})\},
    \end{align*}
    the right hand sides of (\ref{measure1}) and (\ref{measure2}) are equal. Thus our desired inequality is obtained. \qed
\end{proof}

\begin{lemma}We have
\begin{align*}
    \omega(R_2-s) < \omega(R_2+s)
\end{align*}
for $0<s<R_2$.
\end{lemma}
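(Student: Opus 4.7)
The plan is a direct computation splitting on the CROSS type. Since the Lemma appears in Section \ref{section3.2.3}, we only need the compact case, where $s(r)=\sin r$ and, for the projective family, $c(r)=\cos r$. The hypothesis $R_2<\mathrm{inj}(M)/2$ translates into $R_2<\pi/2$ for $\mathbb{S}^m$ and $R_2<\pi/4$ for the remaining CROSSs. The one trigonometric tool I would use throughout is the sum-to-product identity
\begin{align*}
\sin(A+B)-\sin(A-B)=2\cos A\sin B .
\end{align*}

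For $M=\mathbb{S}^m$ or $\mathbb{R}P^n$ one has $\omega(r)=\sin^{m-1}(r)$ and $R_2<\pi/2$. First I would apply the identity with $A=R_2$, $B=s$ to obtain $\sin(R_2+s)-\sin(R_2-s)=2\cos(R_2)\sin(s)>0$, each factor being positive because $0<s<R_2<\pi/2$. Since $0<R_2-s<R_2+s<2R_2<\pi$ the two quantities $\sin(R_2\pm s)$ are both positive, and raising to the $(m-1)$-st power preserves the strict inequality.

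For $M=\mathbb{C}P^n,\mathbb{H}P^n,\mathbb{O}P^2$ one has $\omega(r)=\sin^{m-1}(r)\cos^{k-1}(r)$ with $k=\dim_{\mathbb{R}}\mathbb{K}\in\{2,4,8\}$ and $R_2<\pi/4$. The key step is to use the double-angle formula $\sin r\cos r=\tfrac12\sin(2r)$ to rewrite
\begin{align*}
\omega(r)=\frac{1}{2^{k-1}}\,\sin^{m-k}(r)\,\sin^{k-1}(2r),
\end{align*}
which is a product of positive factors on $(0,\pi/2)$ because $m-k\ge k\ge1$. I would then apply the same sum-to-product identity twice, once at $r$ and once at $2r$:
\begin{align*}
\sin(R_2+s)-\sin(R_2-s) &= 2\cos(R_2)\sin(s)>0,\\
\sin(2R_2+2s)-\sin(2R_2-2s) &= 2\cos(2R_2)\sin(2s)>0,
\end{align*}
using $R_2<\pi/4$ to ensure $\cos(2R_2)>0$ and $0<s<R_2<\pi/4$ to ensure $\sin(2s)>0$. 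Each factor of $\omega$ therefore strictly increases from $R_2-s$ to $R_2+s$, and the claimed inequality follows.

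The only real obstacle is in the projective case: the bare factors $\sin^{m-1}(r)$ and $\cos^{k-1}(r)$ are monotone in opposite directions on $(0,\pi/2)$, so $\omega$ itself need not be monotone on the whole interval $(R_2-s,R_2+s)$ when $R_2$ approaches $\pi/4$ and a pointwise monotonicity argument breaks down. The double-angle rewriting circumvents this by replacing the competing $\sin$-$\cos$ product with a product of two $\sin$ functions, each of which satisfies the midpoint inequality via the single identity above; this is the step I expect to be the crux.
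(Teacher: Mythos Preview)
Your argument is correct and proceeds along a genuinely different line from the paper's. The paper treats the projective family by analyzing the shape of $\omega(t)=\sin^{m-1}t\,\cos^{k-1}t$: it is unimodal with its maximum at $t_0=\arctan\sqrt{(m-1)/(k-1)}\ge\pi/4$, and it satisfies the reflection inequality $\omega(t)<\omega(\tfrac{\pi}{2}-t)$ for $t<\pi/4$ (a one-line consequence of $m\ge k$). If $R_2+s<t_0$, monotonicity of $\omega$ already gives the claim; otherwise one chains $\omega(R_2-s)<\omega(\tfrac{\pi}{2}-(R_2-s))<\omega(R_2+s)$, the second step using that both arguments lie beyond $t_0$ and that $\tfrac{\pi}{2}-(R_2-s)>R_2+s$ because $R_2<\pi/4$. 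Your double-angle rewriting $\omega(r)=2^{1-k}\sin^{m-k}r\,\sin^{k-1}(2r)$ bypasses this case split entirely: it replaces the competing $\sin$/$\cos$ factors by two positive factors each obeying the two-point comparison $f(R_2+s)>f(R_2-s)$ via the single identity $\sin(A+B)-\sin(A-B)=2\cos A\sin B$, with $\cos R_2>0$ and $\cos 2R_2>0$ (from $R_2<\pi/4$) carrying all the weight. This is slicker and makes the role of the hypothesis $R_2<\mathrm{inj}(M)/2$ completely transparent. One phrasing caution: your line ``each factor strictly increases from $R_2-s$ to $R_2+s$'' should be read as a comparison of endpoint values, not as monotonicity on the whole interval---indeed $\sin(2r)$ need not be monotone there once $R_2+s>\pi/4$---but the endpoint inequality is exactly what the identity delivers and exactly what the product argument needs.
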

\begin{proof}
    We begin with $M=\mathbb{R}P^n, \mathbb{C}P^n, \mathbb{H}P^n, \mathbb{O}P^2$, which are CROSS except for $\mathbb{S}^m$. Then $s<R_2<\frac{\pi}{4}$. We have two observations of the density function $\omega(t)=(\sin{t})^{m-1}(\cos{t})^{k-1}$ :
\begin{align*}\begin{cases}
    \omega'(t)>0 &\text{if}\enspace t<\arctan{\sqrt{\frac{m-1}{k-1}}},\\
     \omega'(t)<0 &\text{if}\enspace t>\arctan{\sqrt{\frac{m-1}{k-1}}}.
\end{cases}
\end{align*}
and 
\begin{align*}
    \omega(t) \le \omega(\frac{\pi}{2}-t)
\end{align*}
for $t<\frac{\pi}{4}$.
  The second observation follows from
 \begin{align*}
     \omega(\frac{\pi}{2}-t)-\omega(t)
     &=(\cos{t})^{m-1}(\sin{t})^{k-1}-(\sin{t})^{m-1}(\cos{t})^{k-1}\\
     &=(\sin{t})^{k-1}(\cos{t})^{k-1}((\cos{t})^{m-k}-(\sin{t})^{m-k})>0.
 \end{align*}
   Therefore if 
   \begin{align*}
       R_2+s<\arctan{\sqrt{\frac{m-1}{k-1}}},
   \end{align*} the first observation implies 
   \begin{align*}
       \omega(R_2-s)<\omega(R_2+s).
   \end{align*} Otherwise, the two observations give
   \begin{align*}
      \omega(R_2-s)<\omega(\frac{\pi}{2}-(R_2-s))< \omega(R_2+s).
   \end{align*} 
   Therefore the proof for CROSS follows except for $\mathbb{S}^m$. The same proof also works for $\mathbb{S}^m$ if we replace $\frac{\pi}{4}$ and $\frac{\pi}{2}$ by $\frac{\pi}{2}$ and $\pi$, respectively. \qed
\end{proof}

\section*{Acknowledgement}
The author wishes to express his gratitude to Jaigyoung Choe for helpful discussions. This research was partially supported by NRF-2018R1A2B6004262 and NRF-2020R1A4A3079066. 


%
 \section*{Conflict of interest}
The authors declare that they have no conflict of interest.




\begin{thebibliography}{}
%
%


\bibitem{agranovich2006mixed}
M.~S. Agranovich, \emph{On a mixed {P}oincar\'{e}-{S}teklov type spectral
  problem in a {L}ipschitz domain}, Russ. J. Math. Phys. \textbf{13} (2006),
  239--244.

\bibitem{Aithal:1996:SUB}
A.~R. Aithal and G.~Santhanam, \emph{Sharp upper bound for the first non-zero
  {N}eumann eigenvalue for bounded domains in rank-{$1$} symmetric spaces},
  Trans. Amer. Math. Soc. \textbf{348} (1996), no.~10, 3955--3965.

\bibitem{Ashbaugh:1995:SUB}
M.~S. Ashbaugh and R.~D. Benguria, \emph{Sharp upper bound to the first nonzero
  {N}eumann eigenvalue for bounded domains in spaces of constant curvature}, J.
  London Math. Soc. (2) \textbf{52} (1995), no.~2, 402--416.

\bibitem{bergery1982laplacians}
L.~B\'{e}rard-Bergery and J.-P. Bourguignon, \emph{Laplacians and {R}iemannian
  submersions with totally geodesic fibres}, Illinois J. Math. \textbf{26}
  (1982), 181--200.

\bibitem{binoy2014sharp}
Binoy and G.~Santhanam, \emph{Sharp upper bound and a comparison theorem for
  the first nonzero {S}teklov eigenvalue}, J. Ramanujan Math. Soc. \textbf{29}
  (2014), 133--154.

\bibitem{brehm1990shape}
U.~Brehm, \emph{The shape invariant of triangles and trigonometry in two-point
  homogeneous spaces}, Geom. Dedicata \textbf{33} (1990), 59--76.

\bibitem{brock2001isoperimetric}
F.~Brock, \emph{An isoperimetric inequality for eigenvalues of the {S}tekloff
  problem}, ZAMM Z. Angew. Math. Mech. \textbf{81} (2001), 69--71.

\bibitem{bucur2017weinstock}
D.~Bucur, V.~Ferone, C.~Nitsch, and C.~Trombetti, \emph{Weinstock inequality in
  higher dimensions}, arXiv:1710.04587.

\bibitem{castillon2019spectral}
P.~Castillon and B.~Ruffini, \emph{A spectral characterization of geodesic
  balls in non-compact rank one symmetric spaces}, Ann. Sc. Norm. Super. Pisa Cl. Sci. (5) \textbf{19} (2019), no.~4, 1359--1388.

\bibitem{Chavel:1980:LEI}
I.~Chavel, \emph{Lowest-eigenvalue inequalities}, Geometry of the {L}aplace
  operator ({P}roc. {S}ympos. {P}ure {M}ath., {U}niv. {H}awaii, {H}onolulu,
  {H}awaii, 1979), Proc. Sympos. Pure Math., XXXVI, Amer. Math. Soc.,
  Providence, R.I., 1980, pp.~79--89.

\bibitem{escobar1999isoperimetric}
J.~F. Escobar, \emph{An isoperimetric inequality and the first {S}teklov
  eigenvalue}, J. Funct. Anal. \textbf{165} (1999), 101--116.

\bibitem{fraser2019shape}
A.~Fraser and R.~Schoen, \emph{Shape optimization for the {S}teklov problem in
  higher dimensions}, Adv. Math. \textbf{348} (2019), 146--162.

\bibitem{ftouhi2019where}
I.~Ftouhi, \emph{Where to place a spherical obstacle so as to maximize the first Steklov eigenvalue}, 2019.
ffhal-02334941

\bibitem{hartman2002ordinary}
P.~Hartman, \emph{Ordinary differential equations}, Classics in Applied
  Mathematics, vol.~38, Society for Industrial and Applied Mathematics (SIAM),
  Philadelphia, PA, 2002, Corrected reprint of the second (1982) edition
  [Birkh\"{a}user, Boston, MA; MR0658490 (83e:34002)], With a foreword by Peter
  Bates.

\bibitem{hersch1968extremal}
J.~Hersch and L.~E. Payne, \emph{Extremal principles and isoperimetric
  inequalities for some mixed problems of {S}tekloff's type}, Z. Angew. Math.
  Phys. \textbf{19} (1968), 802--817.

\bibitem{hsiang1989laws}
W.-Y. Hsiang, \emph{On the laws of trigonometries of two-point homogeneous
  spaces}, Ann. Global Anal. Geom. \textbf{7} (1989), 29--45.

\bibitem{Izmestiev2017ivory}
I.~Izmestiev and S.~Tabachnikov, \emph{Ivory's theorem revisited}, J.
  Integrable Syst. \textbf{2} (2017), xyx006, 36.

\bibitem{karcher1989comparison}
H.~Karcher, \emph{Riemannian comparison constructions}, Global differential
  geometry (S.~S. Chern, ed.), MAA Stud. Math., vol.~27, Math. Assoc. America,
  Washington, DC, 1989, pp.~170--222.

\bibitem{Kozlov2000newton}
V.~V. Kozlov, \emph{Newton and {I}vory attraction theorems in spaces of
  constant curvature}, Vestnik Moskov. Univ. Ser. I Mat. Mekh. (2000), 43--47,
  68.

\bibitem{Newton1972principia}
I.~Newton, \emph{Philosophiae naturalis principia mathematica. {V}ol. {I}},
  Harvard University Press, Cambridge, Mass., 1972, Reprinting of the third
  edition (1726) with variant readings, Assembled and edited by Alexandre
  Koyr\'{e} and I. Bernard Cohen with the assistance of Anne Whitman.

\bibitem{petersen2006riemannian}
P.~Petersen, \emph{Riemannian geometry}, second ed., Graduate Texts in
  Mathematics, vol. 171, Springer, New York, 2006.

\bibitem{verma2018eigenvalue}
G.~Santhanam and S.~Verma, \emph{On eigenvalue problems related to the
  {L}aplacian in a class of doubly connected domains}, arXiv:1803.05750.

\bibitem{Simon1983measure}
L.~Simon, \emph{Lectures on geometric measure theory}, Proceedings of the
  Centre for Mathematical Analysis, Australian National University, vol.~3,
  Australian National University, Centre for Mathematical Analysis, Canberra,
  1983.

\bibitem{stekloff1902problemes}
W.~Stekloff, \emph{Sur les probl\`emes fondamentaux de la physique
  math\'{e}matique (suite et fin)}, Ann. Sci. \'{E}cole Norm. Sup. (3)
  \textbf{19} (1902), 455--490.

\bibitem{todhunter1863spherical}
I.~Todhunter, \emph{Spherical trigonometry, for the use of colleges and
  schools: with numerous examples}, CreateSpace Independent Publishing Platform
  (1802), 2014.

\bibitem{weinstock1954inequalities}
R.~Weinstock, \emph{Inequalities for a classical eigenvalue problem}, J.
  Rational Mech. Anal. \textbf{3} (1954), 745--753.

\bibitem{wolf1972spaces}
J.~A. Wolf, \emph{Spaces of constant curvature}, fifth ed., Publish or Perish,
  Inc., Houston, TX, 1984.
\end{thebibliography}
\end{document}